\UseRawInputEncoding
\documentclass[12pt,reqno]{amsart}
\usepackage{amssymb,latexsym,color,}
\usepackage{amsmath}
\usepackage{amsthm}
\usepackage{graphicx}
\usepackage{palatino,mathpazo}
\usepackage{titletoc}

\usepackage{times,dsfont,ifthen,mathrsfs,amsfonts}

\topmargin 0pt \advance \topmargin
by-\headheight \advance \topmargin by-\headsep \linespread{1.1}
\textheight 8.6in \oddsidemargin 0pt \evensidemargin \oddsidemargin
\marginparwidth 0.5in \textwidth 6.4in

\numberwithin{equation}{section}
\newtheorem{theorem}{Theorem}[section]
\newtheorem{proposition}[theorem]{Proposition}
\newtheorem{lemma}[theorem]{Lemma}
\newtheorem{remark}[theorem]{Remark}
\newtheorem{corollary}[theorem]{Corollary}

\newtheorem{definition}[theorem]{Definition}

\theoremstyle{definition}

\newtheorem{example}{Example}[section]

\def\XXint#1#2#3{{\setbox0=\hbox{$#1{#2#3}{\int}$}
     \vcenter{\hbox{$#2#3$}}\kern-.5\wd0}}

\newcommand{\R}{\mathbb{R}}
\newcommand{\RN}{\mathbb{R}^2}

\newcommand{\lab}{\label}
\newcommand{\bt}{\begin{theorem}}
\newcommand{\et}{\end{theorem}}
\newcommand{\bl}{\begin{lemma}}
\newcommand{\el}{\end{lemma}}
\newcommand{\bd}{\begin{definition}}
\newcommand{\ed}{\end{definition}}
\newcommand{\bc}{\begin{corollary}}
\newcommand{\ec}{\end{corollary}}
\newcommand{\bp}{\begin{proof}}
\newcommand{\ep}{\end{proof}}
\newcommand{\bx}{\begin{example}}
\newcommand{\ex}{\end{example}}
\newcommand{\bi}{\begin{exercise}}
\newcommand{\ei}{\end{exercise}}
\newcommand{\bo}{\begin{proposition}}
\newcommand{\eo}{\end{proposition}}
\newcommand{\brr}{\begin{remark}}
\newcommand{\er}{\end{remark}}
\newcommand{\beq}{\begin{equation}}
\newcommand{\eeq}{\end{equation}}
\newcommand{\ba}{\begin{align}}
\newcommand{\ea}{\end{align}}
\newcommand{\bn}{\begin{enumerate}}
\newcommand{\en}{\end{enumerate}}
\newcommand{\bg}{\begin{align*}}
\newcommand{\bcs}{\begin{cases}}
\newcommand{\ecs}{\end{cases}}

\newcommand{\De}{\Delta}

\begin{document}
\title[A global branch approach to normalized solutions]{A global branch approach to normalized solutions for the Schr\"odinger equation}
 \thanks{Xuexiu Zhong was supported by the NSFC (No.12271184), Guangdong Basic and Applied Basic Research Foundation (2021A1515010034), Guangzhou Basic and Applied Basic Research Foundation(202102020225),Guangdong Natural Science Fund (2018A030310082). Jianjun Zhang was supported by the NSFC (No.12371109)}
\author{ Louis Jeanjean}
\address{ Louis Jeanjean,~Laboratoire de Math\'ematiques (CNRS UMR 6623),  Universit\'e de Bourgogne Franche-Comt\'e, 16 Rte de Gray, 25030,Besancon, France.}
\email{louis.jeanjean@univ-fcomte.fr}
\author{Jianjun Zhang}
\address{ Jianjun Zhang,~College of Mathematics and Statistics, Chongqing Jiaotong University,Xuefu,Nan'an,400074, Chongqing, PR China.}
\email{zhangjianjun09@tsinghua.org.cn}
\author{Xuexiu Zhong}
\address{ Xuexiu ~Zhong,~South China Research Center for Applied Mathematics and Interdisciplinary Studies \& School of Mathematics Science,South China Normal University, Tianhe, Guangzhou 510631, Guangdong, PR China.}
\email{zhongxuexiu1989@163.com}
\date{}
\begin{abstract}
We study the existence, non-existence and multiplicity of prescribed mass positive solutions to a Schr\"odinger equation of the form
\begin{equation*}
-\Delta u+\lambda u=g(u), \quad u \in H^1(\mathbb{R}^N), \, N \geq 1.
\end{equation*}
Our approach permits to handle in a unified way nonlinearities $g(s)$ which are either mass subcritical, mass critical or mass supercritical. Among its main ingredients is the study of the asymptotic behaviors of the positive solutions as $\lambda\rightarrow 0^+$ or $\lambda\rightarrow +\infty$ and the existence of an unbounded continuum of solutions in $(0, + \infty) \times H^1(\mathbb{R}^N)$.
\vskip0.23in

\noindent{\it   {\bf Key  words:}}   Global branch, Schr\"odinger equation, Positive normalized solution.

\vskip0.1in
\noindent{\it  {\bf 2010 Mathematics Subject Classification:}} 35A16, 35B40, 35A02, 35J60, 46N50.
\end{abstract}

\maketitle

\section{introduction}\label{sec1}
The aim of this paper is to study the existence, non-existence and multiplicity of positive  solutions with a prescribed mass for
nonlinear Schr\"odinger equations with general nonlinearities
\begin{equation}\label{eq:ENLS}
-i\frac{\partial}{\partial t}\Phi=\Delta \Phi+f(\mid\Phi\mid^2)\Phi \quad \hbox{in}\;\RN.
\end{equation}
The ansatz $\Phi(x,t)=e^{i\lambda t}u(x)$ for standing waves solutions leads to the  elliptic equation
\begin{equation}\label{eq:NLS}
-\Delta u+\lambda u=g(u)\quad \hbox{in}\;\RN,
\end{equation}
with $g(u)=f(\mid u\mid^2)u$. For physical reason we shall focus on solutions to (\ref{eq:NLS}) in $H^1(\RN)$.  In the past decades, this equation has been investigated by many authors, mainly in the fixed frequency case where $\lambda \in [0, + \infty)$ is prescribed.

An important feature of \eqref{eq:ENLS} is the conservation of mass: the $L^2$-norm $\|\Phi(\cdot,t)\|_2$ of a solution is
independent of $t\in\mathbb{R}$. In view of this property it is relevant to search for solutions satisfying a normalization constraint
\begin{equation}\label{eq:norm}
  \int_{\RN}\mid u\mid^2 dx =a, \quad a >0.
\end{equation}
When $g(s)=\mid s\mid^{p-1}s$ with $p+1\in (2, 2^*)$, where $2^*=+\infty$ for $N=1,2$ and $2^*=\frac{2N}{N-2}$ for $N\geq 3$ is the Sobolev critical exponent, the issue of positive normalized solutions to \eqref{eq:NLS}-\eqref{eq:norm} can be completely solved by scaling. Indeed, let $U_p$ be the unique positive radial solution to
\begin{equation*}
  -\Delta u+u=u^{p}\,\, \mbox{in}\, \, \RN;\quad u(x)\rightarrow 0\ \text{ as $\mid x\mid\rightarrow\infty$;}
\end{equation*}
cf.\ \cite{Kwong1989}.
Setting
$$U_{\lambda,p}(x):=\lambda^{\frac{1}{p-1}}U_p(\sqrt{\lambda}x),$$
one can check that, up to a translation, $U_{\lambda,p}$ is the unique positive solution to
$$ -\Delta u+\lambda u=u^{p}\, \, \mbox{in}\, \, \RN;\quad u(x)\rightarrow 0\ \text{ as $\mid x\mid\rightarrow\infty$.}$$
A direct computation shows that
\begin{equation*}
\|U_{\lambda,p}\|_{L^2(\RN)}^{2}=\lambda^{\frac{4-(p-1)N}{2(p-1)}}\|U_p\|_{L^2(\RN)}^{2}.
\end{equation*}
So one can see that if $p\neq \bar{p}:=1+\frac{4}{N}$, there exists a  unique $\displaystyle\lambda_a>0$ such that $\displaystyle \|U_{\lambda_a,p}\|_{L^2(\RN)}^{2}=a$.
That is, there exists a  positive normalized solution to \eqref{eq:NLS}-\eqref{eq:norm} for any $a>0$ whenever $p\neq 1+\frac{4}{N}$ (and it is unique up to a translation). While for the so-called mass critical exponent $p+1=2+\frac{4}{N}$, \eqref{eq:NLS}-\eqref{eq:norm} has a positive normalized solution if and only if $a=\|U_p\|_{L^2(\RN)}^{2}$ (with infinitely many solutions and $\lambda>0$).

However, if $g(s)$ is nonhomogeneous, such scaling argument is not valid anymore, more elaborate arguments are required.
The classical way to attack such problems is through a variational approach: one looks for critical points $u\in H^1(\RN)$ of the energy functional
\begin{equation*}
  J(u) = \frac{1}{2}\int_{\RN}\mid\nabla u\mid^2 \, dx- \int_{\RN}G(u) \, dx, \quad \mbox{where} \quad G(s):=\int_0^s g(t)dt,
\end{equation*}
subject to the constraint
$$ S_a:=\{u\in H^1(\RN): \|u\|_{L^2(\RN)}^{2}=a\}.$$
Note that in such an approach the parameter $\lambda \in \mathbb{R}$ appears as a Lagrange multiplier and is not a priori known. \medskip

One speaks of  a mass subcritical case if $J(u)$ is bounded from below on $S_a$ for any $a>0$, and of a  mass supercritical case if $J(u)$ is unbounded from below for any $a>0$. One also refers to a mass critical case when the boundedness from below depends on the value of $a>0$. Actually the case we are in crucially depends on the nonlinearity $g(s)$. So, when using a variational approach, in each case specific global conditions need to be imposed on $g(s)$ to treat the problem.

The study of the mass subcritical case, which can be traced back to the work of Stuart  \cite{Stuart1981,Stuart1982}, had seen a major advance with the introduction of the Compactness by Concentration approach of Lions \cite{Lions1984a,Lions1984b}. Nowadays, this case is rather well-understood and we refer to
\cite{Shibata2014, JeanLu2019, Jeanjean-Lu-2021, Stefanov19} for recent contributions.

In the mass supercritical case, $J(u)$ is unbounded from below on $S_a$ and thus there is no global minimizer. In \cite{Jeanjean1997}, the first author imposed some global conditions on $g(s)$ to guarantee the mountain pass geometry of $J(u)$ on $S_a$.
 Then, after solving some compactness issues, a normalized solution was obtained. Since \cite{Jeanjean1997} there has been numerous contributions in the mass supercritical, and mass critical cases. Let us just mention here \cite{BartschValeriola2013, BartschSoave2017, {BartschSoave2018}, BieganowskiMederski, JeanLu2020a, ikomatanaka2019, Soave-1}. We also refer to \cite{JeanjeanLe2020, Soave2020, Wei-Wu2021} for the treatment of nonlinearities which are Sobolev critical.

In the current paper, our aim is to present a different approach, based on the fixed point index and continuation arguments, to study normalized solutions problems. This approach does not rely on the variational structure, and thus we can treat in a unified way nonlinearities which are either mass subcritical, mass critical or mass supercritical. Apart for providing another framework  for the study of normalized problems, we believe it is already of interest for the existence of global branches of solutions. In contrast to the classical bifurcation methods, which concerns a branch bifurcating from some eigenvalue, in the spirit of Rabinowitz's global bifurcation theorem \cite{Rabinowitz1971}, our argument offers the possibility to investigate problems without eigenvalues.

It seems that Bartsch, Zou and the third author \cite{Bartsch-Zhong-Zou-2021} are the first to apply a global branch approach to study the existence of normalized solutions. They focused on the following coupled {S}chr\"{o}dinger system
\beq\lab{eq:system}
\begin{cases}
  -\De u+\lambda_1 u=\mu_1 u^3+\beta uv^2,~\hbox{in}~\RN,\\
  -\De v+\lambda_2 v=\mu_2 v^3+\beta vu^2,~\hbox{in}\;\RN,\\
  \int_{\RN}u^2=a^2\quad\hbox{and}\;\int_{\RN}v^2=b^2.
\end{cases}\
\eeq
The existence of normalized solutions for a large range of $\beta$ was obtained for arbitrary masses $a>0$ and $b>0$. The results of \cite{Bartsch-Zhong-Zou-2021} significantly extended the ones of Bartsch et al.\cite{BartschJeanjeanSoave16}, which rely on a variational approach on the $L^2$-spheres. However, we note that in \cite{Bartsch-Zhong-Zou-2021},  there is a positive branch bifurcating from the branch of semi-trivial solutions when $\beta$ is large. Also, the implicit theorem can be applied for $\beta=0$, since the problem \eqref{eq:system} is reduced to two independent scalar equations whose positive solutions are well identified  by \cite{Kwong1989}. In our scalar problem, we do not benefit from these features.  The global branch approach we shall develop is rather distinct from the one of \cite{Bartsch-Zhong-Zou-2021}. \medskip

The following assumptions will be used in the paper.
\begin{itemize}
\item[(G1)] $g \in C^1([0, + \infty))$, $g(s)>0$ for $s> 0$.
\item[(G2)] There exists some $(\alpha,\beta)\in \mathbb{R}_+^2$ satisfying
$$\begin{cases}
2<\alpha,\beta<2^*:=\frac{2N}{N-2}\quad&\hbox{if}~N\geq 3,\\
2<\alpha, \beta<2^*:=+\infty \quad&\hbox{if}~N=1,2,
\end{cases}$$
such that
$$\lim_{s\rightarrow 0^+}\frac{g'(s)}{s^{\alpha-2}}=\mu_1(\alpha-1)>0 \quad \mbox{and} \quad \lim_{s\rightarrow +\infty}\frac{g'(s)}{s^{\beta-2}}=\mu_2(\beta-1)>0.$$
\item[(G3)]  $-\Delta u=g(u)$ has no positive radial decreasing classical solution in $\mathbb{R}^N$.
\end{itemize}

\brr\lab{remark:20211208-r1}
\begin{itemize}
\item [(i)]
Note that (G1)-(G2) imply that $g(0)=0$.
Since we are interested in non-negative solutions to \eqref{eq:NLS}, it is not restrictive to assume, throughout the paper, that $g$ is extended to a continuous function on all $\mathbb{R}$ by setting $g(s) =0$, for all $s \in (-\infty,0].$ Then, by the weak maximum principle, any solution to \eqref{eq:NLS} is non-negative. The strong maximum principle also implies that it is strictly positive.
\item[(ii)]In Theorem \ref{thm:20210903-th2} we provide sufficient conditions on $g(s)$ under which condition (G3) holds. Actually, under (G1) and (G2), the condition (G3) holds, if $N=1,2$ or assuming that $\alpha-1 \leq \frac{N}{N-2}$ if $N \geq 3$. Furthermore, in the case of $N\geq 3$ with $\alpha-1>\frac{N}{N-2}$, if we suppose further that $sg(s)\leq 2^*G(s)$ in $[0,+\infty)$, then (G3) also holds.
\end{itemize}
\er\smallskip

In our setting, what is essentially imposed is the behavior of $g(s)$ at $0$ and $\infty$. Apart, to some extend, with condition (G3), we do not need  global conditions on $g$. Our main result is the following theorem.
\bt\lab{thm:20210822-th1}
Let $N\geq 1$ and assume that (G1)-(G3) hold. Then we have the following conclusions.
\begin{itemize}
\item[(i)]{\bf(mass subcritical case)} If $2<\alpha, \beta<2+\frac{4}{N}$,
    then for any given $a>0$, \eqref{eq:NLS}-\eqref{eq:norm} possesses a positive normalized solution $(\lambda, u_\lambda) \in (0, + \infty) \times H_{rad}^{1}(\RN) $.
\item[(ii)]{\bf (exactly mass critical case)} If $\alpha=\beta=2+\frac{4}{N}$, denote $\bar{\mu}_1:=\min\{\mu_1,\mu_2\}$ and $\bar{\mu}_2:=\max\{\mu_1,\mu_2\}$. Then  \eqref{eq:NLS}-\eqref{eq:norm} possesses at least one positive normalized solution   $(\lambda, u_\lambda) \in (0, + \infty) \times H_{rad}^{1}(\RN) $ provided $a\in \displaystyle\left(\bar{\mu}_{2}^{-\frac{N}{2}}\|U_{1+\frac{4}{N}}\|_2^2,\, \bar{\mu}_{1}^{-\frac{N}{2}}\|U_{1+\frac{4}{N}}\|_2^2\right)$. In particular, there exists some $0<a_1<\bar{\mu}_{2}^{-\frac{N}{2}}\|U_{1+\frac{4}{N}}\|_2^2$ and $a_2>\bar{\mu}_{1}^{-\frac{N}{2}}\|U_{1+\frac{4}{N}}\|_2^2$ such that  \eqref{eq:NLS}-\eqref{eq:norm} has no positive normalized solution for $a\in (0,a_1)\cup (a_2,+\infty)$.
\item[(iii)] {\bf(at most mass critical case)}
\begin{itemize}
\item[(iii-1)] If $2<\alpha<\beta=2+\frac{4}{N}$,  \eqref{eq:NLS}-\eqref{eq:norm} has at least one positive normalized solution $(\lambda, u_\lambda) \in (0, + \infty) \times H_{rad}^{1}(\RN) $ if $0<a<\mu_{2}^{-\frac{N}{2}}\|U_{1+\frac{4}{N}}\|_2^2$. Furthermore, there exists some $a^*\geq \mu_{2}^{-\frac{N}{2}}\|U_{1+\frac{4}{N}}\|_2^2$ such that \eqref{eq:NLS}-\eqref{eq:norm} has no positive normalized solution provided $a>a^*$.
\item[(iii-2)] If $2<\beta<\alpha=2+\frac{4}{N}$,  \eqref{eq:NLS}-\eqref{eq:norm} has at least one positive normalized solution $(\lambda, u_\lambda) \in (0, + \infty) \times H_{rad}^{1}(\RN) $
 if $a>\mu_{1}^{-\frac{N}{2}}\|U_{1+\frac{4}{N}}\|_2^2$. Furthermore, there exists some positive $a^*\leq \mu_{1}^{-\frac{N}{2}}\|U_{1+\frac{4}{N}}\|_2^2$ such that \eqref{eq:NLS}-\eqref{eq:norm} has no positive normalized solution provided $0<a<a^*$.
\end{itemize}
\item[(iv)] {\bf(mixed case)}
\begin{itemize}
\item[(iv-1)] If $2<\alpha< 2+\frac{4}{N}<\beta<2^*$, there exists some $0<a^*<M<+\infty$ such that for any $a\in (0,a^*)$, \eqref{eq:NLS}-\eqref{eq:norm} has at least two distinct positive normalized solutions $(\lambda_i, u_{\lambda_i}) \in (0, + \infty) \times H_{rad}^{1}(\RN),i=1,2$, while \eqref{eq:NLS}-\eqref{eq:norm} has no positive normalized solution provided $a>M$.
\item[(iv-2)] If $2<\beta< 2+\frac{4}{N}<\alpha<2^*$, there exists some $0<m<a^*<+\infty$ such that for any $a\in (a^*,+\infty)$, \eqref{eq:NLS}-\eqref{eq:norm} has at least two distinct positive normalized solutions $(\lambda_i, u_{\lambda_i}) \in (0, + \infty) \times H_{rad}^{1}(\RN),i=1,2$, while \eqref{eq:NLS}-\eqref{eq:norm} has no positive normalized solution provided $0<a<m$.
\end{itemize}
\item[(v)] {\bf(at least mass critical case)}
\begin{itemize}
\item[(v-1)] If $2+\frac{4}{N}=\alpha< \beta<2^*$, \eqref{eq:NLS}-\eqref{eq:norm} has at least one positive normalized solution $(\lambda, u_\lambda) \in (0, + \infty) \times H_{rad}^{1}(\RN) $ if $0<a<\mu_{1}^{-\frac{N}{2}}\|U_{1+\frac{4}{N}}\|_2^2$. Furthermore, there exists some $a^*\geq \mu_{1}^{-\frac{N}{2}}\|U_{1+\frac{4}{N}}\|_2^2$ such that \eqref{eq:NLS}-\eqref{eq:norm} has no positive normalized solution provided $a>a^*$.
\item[(v-2)] If $2+\frac{4}{N}=\beta< \alpha<2^*$,\eqref{eq:NLS}-\eqref{eq:norm} has at least one positive normalized solution $(\lambda, u_\lambda) \in (0, + \infty) \times H_{rad}^{1}(\RN) $ if $a>\mu_{2}^{-\frac{N}{2}}\|U_{1+\frac{4}{N}}\|_2^2$. Furthermore, there exists some positive $a^*\leq \mu_{2}^{-\frac{N}{2}}\|U_{1+\frac{4}{N}}\|_2^2$ such that \eqref{eq:NLS}-\eqref{eq:norm} has no positive normalized solution provided $0<a<a^*$.
\end{itemize}
\item[(vi)] {\bf(mass supercritical case)} If $2+\frac{4}{N}<\alpha, \beta<2^*$, then for any given $a>0$, \eqref{eq:NLS}-\eqref{eq:norm} possesses a normalized positive solution $(\lambda, u_\lambda) \in (0, + \infty) \times H_{rad}^{1}(\RN)$.
\end{itemize}
\et

There are two main steps in the proof of  Theorem \ref{thm:20210822-th1}. One is to show the existence of a continuum of positive solutions to \eqref{eq:NLS}. The other one is to understand the behavior of the $L^2$-norms of the positive solutions as $\lambda\rightarrow 0^+$ or $\lambda\rightarrow +\infty$. In that direction we have the following result which has its own interest.
\smallskip

In Theorem \ref{cro:20210824-xc1} below, and throughout the paper $C_{r,0}(\RN)$, denotes the space of continuous radial functions vanishing at $\infty$.
\bt\lab{cro:20210824-xc1}
Let $N \geq 1$ and assume that (G1)-(G3) hold. Let $U \in C_{r,0}(\RN)$ be the unique positive solution to
\beq\lab{eq:20210820-e6L}
-\Delta U+U=\mu_1 U^{\alpha-1}~\hbox{in}~\RN,
\eeq
and $V \in C_{r,0}(\RN)$ be the unique positive solution to
\beq\lab{eq:20210820-e6l}
-\Delta V+V=\mu_2 V^{\beta-1}~\hbox{in}~\RN.
\eeq
Then it holds,
\begin{itemize}
\item[(i)] Let $\{u_n\}_{n=1}^{\infty}\subset H^{1}(\RN)$ be positive solutions to \eqref{eq:NLS} with $\lambda=\lambda_n\rightarrow 0^+$. Then
    \begin{equation*}
		\lim_{n \rightarrow \infty }\| u_n\|_{\infty}=0, \quad  \quad
   \lim_{n \rightarrow \infty }\|\nabla u_n\|_2=0
   \end{equation*}
    and
   \begin{equation*}
   \lim_{n \rightarrow \infty }\|u_n\|_2=\begin{cases}
   0\quad &\alpha<2+\frac{4}{N},\\
    \|U\|_2& \alpha=2+\frac{4}{N},\\
    +\infty& \alpha>2+\frac{4}{N}.
    \end{cases}
    \end{equation*}
\item[(ii)] Let $\{u_n\}_{n=1}^{\infty}\subset H^{1}(\RN)$ be positive solutions to \eqref{eq:NLS} with $\lambda=\lambda_n\rightarrow + \infty$. Then
    \begin{equation*}
    \lim_{n \rightarrow \infty }\|u_n\|_\infty=+\infty,\lim_{n \rightarrow \infty }\|\nabla u_n\|_2=+\infty
    \end{equation*}
    and
    \begin{equation*}
    \lim_{n \rightarrow \infty }\|u_n\|_2=\begin{cases}
    +\infty\quad &\beta<2+\frac{4}{N},\\
    \|V\|_2& \beta=2+\frac{4}{N},\\
    0& \beta>2+\frac{4}{N}.
    \end{cases}
    \end{equation*}
\end{itemize}
\et

Let us now present the main steps of the proofs of Theorems \ref{thm:20210822-th1} and \ref{cro:20210824-xc1} and more globally the structure of the paper. \smallskip

In Section \ref{sec:prelim}, we collect a few facts about \eqref{eq:NLS}.  In particular, under the assumptions (G1)-(G2), we prove the radial symmetry and the monotonicity of the positive solutions associated to solutions of \eqref{eq:NLS} when $\lambda >0$. Also, through the derivation of some non-existence results, we reach the conclusion that positive solutions to \eqref{eq:NLS} should be searched assuming that $\lambda >0$.

In Section \ref{sec:compactness},
we derive a priori bounds on the $L^\infty$-norm of positive solutions depending on the parameter $\lambda >0$. Furthermore, we  prove the compactness of the set of positive solutions, both in the sense of $C_{r,0}(\RN)$ and in the sense of $H^1(\RN)$, when $\lambda >0$ lies in a compact interval away from $0$.

In Section \ref{sec:behavior}, we apply blow up arguments to explicit the behavior of positive solutions both in $C_{r,0}(\RN)$ and in $H^1(\RN)$, as $\lambda\rightarrow 0^+$ or $\lambda\rightarrow +\infty$. The proof of Theorem \ref{cro:20210824-xc1} is given there.

In Section \ref{sec:loc-uniqueness}, we investigate the uniqueness of positive solutions and prove in Theorem \ref{thm:loc-uniqueness} that under (G1)-(G3), \eqref{eq:NLS} possesses at most one positive solution for $\lambda>0$ small. It is also the case for $\lambda >0$ large just under (G1)-(G2).

In Section \ref{sec:Berestycki-Lions}, relying on classical results from \cite{BGK1983,BerestyckiLions1983}, we prove that \eqref{eq:NLS} has a positive solution  for any $\lambda >0$. Thus, in view of Theorem \ref{thm:loc-uniqueness}, equation \eqref{eq:NLS} has a unique positive solution $u_{\lambda} \in H_{rad}^1(\RN)$ if $\lambda>0$ is small.
We also show that $u_{\lambda} \in H_{rad}^1(\RN)$ is of mp-type, see Lemma \ref{lemma:MPL}, a property that will be useful in Section \ref{sec:global}.
 When $N \geq 2$, this uniqueness combined with the mountain pass variational characterization of the solution permits to show that $\{(\lambda, u_\lambda)\}$ is a curve for $\lambda >0$ small, see Lemma \ref{cro:20210901-xc1}. A separate argument is required when $N=1$, but it directly leads to the conclusion that, the set consisting of all positive solutions $\{(\lambda, u_\lambda):\lambda>0\}$ is connected, see Theorem \ref{thm:20210901-th1}.

In Section \ref{sec:global}, we focus on the case $N \geq 2$. Denoting
$$\mathcal{S}=\{(\lambda, u)\in (0, + \infty) \times H_{rad}^{1}(\RN)  : (\lambda, u)\;\hbox{solves \eqref{eq:NLS}}, \, u>0 \},$$
we prove the existence of an unbounded component $\widetilde{\mathcal{S}} \subset \mathcal{S}$ containing the local branch whose existence is guaranteed by Lemma \ref{cro:20210901-xc1}. More precisely, letting  $P_1: (0, +\infty) \times  H_{rad}^{1}(\RN)  \rightarrow (0, \infty)$ be the projection onto the $\lambda$-component, we prove that $P_1(\widetilde{\mathcal{S}}) = (0,+ \infty)$.

This result is obtained by reformulating  \eqref{eq:NLS} into a fixed point problem. For  $\lambda>0$ we define the map $\mathbb{T}_\lambda : H_{rad}^{1}(\RN)  \mapsto
H_{rad}^{1}(\RN)$ by
$$\mathbb{T}_\lambda (u)=(-\Delta +\lambda)^{-1}g(u).$$
Then, a solution to \eqref{eq:NLS} satisfies the fixed point equation
\begin{equation*}
\mathbb{T}_\lambda(u)=u,
\end{equation*}
where $\mathbb{T}_\lambda$  is  a completely continuous operator, see Lemma \ref{lemma:20210903-zl1}.

A key point to prove that the local branch belongs to an unbounded component of $\mathcal{S}$ is to show that the unique solution $u_{\lambda}$ with $\lambda >0$ small, carries a non null topological degree. Actually, this degree is $-1$.
It is proved combining the fact that $u_{\lambda}$ is a mp-type critical point of $J_{\lambda}(u)$ defined in \eqref{eq:20210901-e5},  with the fact that the Hessian of $J_{\lambda}(u)$ at $u_{\lambda}$ admits a simple non-positive first eigenvalue, see Lemma \ref{prop:20210821-p1}.

Once we know that  the topological degree is nontrivial, following classical arguments which go back to Leray and Schauder, we are able to conclude that $\widetilde{\mathcal{S}} \subset
{\mathcal{S}}$ is unbounded.  In fact, the information on the degree of $u_{\lambda}$ plays a similar role to the presence of a bifurcation point having an odd multiplicity in the classical result of P.H. Rabinowitz \cite{Rabinowitz1971}, which guarantees the existence of a global branch emanating from a bifurcation point. Now, in view of the priori bounds derived in Corollary  \ref{cro:20210824-c1} and
Lemma \ref{lemma:no-bifurcation}, we conclude that  $P_1(\widetilde{\mathcal{S}})= (0, + \infty)$, see Lemma \ref{cro:20210822-c1}.

In Section \ref{sec:application}, Theorem \ref{thm:20210822-th1} is proved combining Theorem \ref{cro:20210824-xc1} with the information that $P_1(\widetilde{\mathcal{S}})= (0, + \infty)$. \medskip

Throughout the paper we use the notation $\|u\|_p$ for the $L^p$-norm. $H_{rad}^{1}(\RN)$ denotes the radial subspace of $H^1(\RN)$. By $\rightharpoonup$  we denote the weak convergence in $H^1(\RN)$. Capital letters $C, C_\varepsilon,C_\lambda$ stand for positive constants which may depend on some parameters and whose precise value may change from line to line.

\section{Preliminaries and non existence results}\label{sec:prelim}
\numberwithin{equation}{section}

Our first lemma gather some properties of $g$ that hold under (G1)-(G2). Its proof is by direct computations.

\bl\lab{20210822-wl1}
Let $N \geq 1$ and assume that  (G1)-(G2) hold,
\begin{itemize}
\item[(i)]$\displaystyle \lim_{s\rightarrow 0^+}\frac{g(s)}{s^{\alpha-1}}=\mu_1$ and
$\displaystyle \lim_{s\rightarrow +\infty}\frac{g(s)}{s^{\beta-1}}=\mu_2$. \\
\item[(ii)] There exists some $C>0$ such that $$g(s)\leq C(s^{\alpha-1}+s^{\beta-1}), \, s\in \mathbb{R}^+$$
and for any $M>0$, there exists some $C_{M,1}\geq C_{M,2}>0$ such that
    $$C_{M,2} s^{\alpha-1}\leq g(s)\leq C_{M,1} s^{\alpha-1}, \forall s\in [0,M].$$
\item[(iii)] There exists some $\tilde{C}>0$ such that $$G(s)\leq \tilde{C}(s^{\alpha}+s^{\beta}), \, s\in \mathbb{R}^+,$$
where $G(s):=\int_0^sg(t)dt$ and for any $M>0$, there exists some $\tilde{C}_{M,1}\geq \tilde{C}_{M,2}>0$ such that
    $$\tilde{C}_{M,2} s^{\alpha-1}\leq G(s)\leq \tilde{C}_{M,1} s^{\alpha-1}, \, \forall s\in [0,M].$$
		\item[(iv)] If $\alpha>\beta$, we have, for some $D>0$,
   $$g'(s)\leq D s^{\beta-2}, \, \, g(s)\leq  D s^{\beta-1}, \, \, G(s) \leq D  s^{\beta}, \,  s\in \mathbb{R}^+.$$
\end{itemize}
\el

\bl\lab{lemma:20210823-l1}
Let $N \geq 1$ and assume that  (G1)-(G2) hold. Let $u$ be a non-negative classical solution to \eqref{eq:NLS} with $\lambda >0$, which vanishes at infinity. Then, up to a translation,
$u$ is radially symmetric and decreases with respect to $\mid x\mid$.
\el
\bp
The conclusion follows by \cite[Theorem 3'']{GNN-1979} for $N=1$ and by \cite[Theorem 2]{GNN-1981} for $N\geq 2$.
\ep

\brr\lab{remark:20210823-r1}
Note that $u \in H^1(\mathbb{R}^N)$ is a non-negative solution to \eqref{eq:NLS} with $\lambda >0$, if and only if $u$ is a non-negative classical solution vanishing at infinity. Indeed, if $u \in H^1(\mathbb{R}^N)$ is a solution to \eqref{eq:NLS},  by a standard Schauder estimate and the Harnack inequality, we can see that
$u\in C^2$ and is vanishing at infinity. Conversely, it is readily seen that any non-negative classical solution to \eqref{eq:NLS} where $\lambda >0$ which vanishes at infinity has an exponential decay and thus belongs to $H^1(\mathbb{R}^N)$. In particular,  the conclusions of Lemma  \ref{lemma:20210823-l1} hold for the non-negative solutions to \eqref{eq:NLS} with $\lambda >0$ which belong to $H^1(\mathbb{R}^N)$.
\er

Let us now present some non-existence results of non-negative solutions.
\bt\lab{thm:20210903-th1}
Let $N \geq 1$ and assume that $g \in C(\mathbb{R}^+, \mathbb{R}^+)$, where $\mathbb{R}^+$ denotes $[0,+\infty)$. Then  \eqref{eq:NLS} has no nontrivial non-negative classical solution if $\lambda <0$.
\et
\bp
If $N=1$ the result follows directly from \cite[Theorem 5]{BerestyckiLions1983}. If $N =2$ the result follows from \cite[Theorem 2.8]{Armstrong-Sirakov-2011} and if $N\geq 3$ from \cite[Theorem 2.1]{Armstrong-Sirakov-2011}.
\ep

We also have a non-existence result when $\lambda=0$, which provides sufficient conditions for (G3) to hold.

\bt\lab{thm:20210903-th2}
Let $N \geq 1$ and $g \in C(\mathbb{R}^+, \mathbb{R}^+)$ be such that,
\begin{equation}\label{behavior-zero}
\liminf_{s \to 0^+} \frac{g(s)}{s^{\gamma -1}} >0, \quad \mbox{for some} \quad \gamma >2.
\end{equation}

\begin{itemize}
\item[(i)]If $N=1,2$ or assuming that $\gamma-1 \leq \frac{N}{N-2}$ if $N \geq 3$,  \eqref{eq:NLS}  has no nontrivial non-negative classical solution when  $\lambda=0$.
\item[(ii)]If $N \geq 3$ assuming that $\gamma-1 < \frac{N+2}{N-2}$ and $sg(s) \leq 2^*G(s)$ in $\mathbb{R}^+$, \eqref{eq:NLS}  has no  nontrivial non-negative radial  classical solution when $\lambda=0$.
\end{itemize}
\et

\bp
(i) Suppose that $u\geq 0$ is a classical solution to \eqref{eq:NLS} with $\lambda=0$. Then from \eqref{behavior-zero} and using the fact that $u$ belongs to $L^{\infty}(\RN)$ we can find a constant $C>0$ such that
$$-\Delta u\geq C \,  u^{\gamma-1} ~\hbox{in}~\RN.$$
At this point, under our assumption on $\gamma >2$, the conclusion follows by \cite[Theorem 8.4]{QuittnerSouplet2007}.\\
(ii) We first remark that if $u\not\equiv 0$ is a radial solution, then $u(r)$ is positive in $[0,+\infty)$ by the maximum principle and it
solves
\beq\lab{eq:20210922-e2}
\begin{cases}
-u''-\frac{N-1}{r}u'=g(u), \quad r>0,\\
u'(0)=0.
\end{cases}
\eeq
Also, $u$ decreases with respect to $r\in [0,+\infty)$. Indeed, $$-(u''(r)+\frac{N-1}{r} u'(r))=g(u(r))>0,$$  implies that $(r^{N-1}u'(r))'<0$. Hence, $r^{N-1}u'(r)$ is decreasing in $[0,+\infty)$. By $r^{N-1}u'(r)\mid_{r=0}=0$, we see that $r^{N-1}u'(r)<0$ for $r>0$. Hence, $u'(r)<0$ for $r>0$.
This implies that $\displaystyle \tau:=\lim_{r\rightarrow \infty}u(r)\geq 0$ exists and since
$$g(\tau)=\lim_{r\rightarrow \infty}-(u''(r)+\frac{N-1}{r} u'(r))=0,$$
we have that  $\tau=0$, i.e.,
\begin{equation*}
\lim_{r\rightarrow +\infty}u(r)=0.
\end{equation*}
The corresponding Pohozaev function is defined by
\beq\lab{eq:20210922-e3}
P(r):=r^N\left(\frac{u'(r)^2}{2}+G(u(r))+\frac{N-2}{2}\frac{u(r)u'(r)}{r}\right).
\eeq
It is of class $C^1[0,+\infty)$ and satisfies $P(0)=0$. A direct computation shows that
\begin{equation*}
P'(r)=r^{N-1}\left(NG(u(r))-\frac{N-2}{2}u(r)g(u(r))\right), \, \forall  r>0.
\end{equation*}
Hence, under our assumption  $sg(s) \leq 2^*G(s)$ in $\mathbb{R}^+$, we see that $P(r)$ is non-decreasing in $[0,+\infty)$.

Let us now prove that $r u'(r)+(N-2)u(r)\geq 0$. It holds for $r=0$. Suppose there exists some $r_0>0$ such that
$$T:=r_0 u'(r_0)+(N-2)u(r_0)<0.$$
By $-\Delta u \geq 0$, we obtain that
$$\Big[r u'(r)+(N-2)u(r)\Big]'\leq 0.$$
That is, $r u'(r)+(N-2)u(r)$ is non-increasing in $\mathbb{R}^+$. So
$$r u'(r)+(N-2)u(r)\leq T<0, \, \forall r>r_0.$$
For $s>t>r_0$, since $u$ is positive, we have
$$-u(t)\leq \int_t^s u'(\tau)d\tau\leq \int_t^s \frac{T}{\tau}d\tau =T \ln\frac{s}{t}.$$
Let $s\rightarrow +\infty$, since $T<0$, we obtain a contradiction. Hence,
$r u'(r)+(N-2)u(r)\geq 0$ for all $r\in [0,+\infty)$
 and thus
\beq\lab{eq:20210922-e6}
\mid u'(r)\mid \leq (N-2)\frac{u(r)}{r}, \quad r>0.
\eeq
Using the fact $u(r)$ is decreasing in $[0,+\infty)$, it follows from \eqref{eq:20210922-e2} and \eqref{eq:20210922-e6} that
\begin{align}\label{G}
G(u(r))=&(N-1)\int_r^\infty \frac{u'(t)^2}{t}dt-\frac{u'(r)^2}{2} \nonumber\\
\leq&(N-1)\int_r^\infty \frac{u'(t)^2}{t}dt\\
\leq& C \int_r^\infty \frac{u(t)^2}{t^3}dt
\leq C \, \frac{u(r)^2}{r^2}. \nonumber
\end{align}
Since $u \in L^{\infty}(\RN)$, under our assumption on $\gamma >2$ there exists a constant $C>0$ such that $G(u(r)) \geq C u(r)^{\gamma}$, for all $r >0$. Thus, using
\eqref{G}, we obtain that
\beq\lab{eq:20210922-e7}
u(r)\leq C_0 r^{-\frac{2}{\gamma-2}}, \;\hbox{for $r$ large enough}.
\eeq
Then by \eqref{eq:20210922-e6}, we have that
\beq\lab{eq:20210922-e8}
\mid u'(r)\mid\leq C_1 r^{-\frac{\gamma}{\gamma-2}}, \;\hbox{for $r >0$ large enough}.
\eeq
So from \eqref{eq:20210922-e3}, \eqref{G}-\eqref{eq:20210922-e8} and since $\gamma<\frac{2N}{N-2}$, we conclude that
\begin{equation*}
\lim_{r\rightarrow +\infty} P(r)=0.
\end{equation*}
Hence, the monotonicity of $P(r)$ implies that $P(r)\equiv 0$ and thus $P'(r)\equiv 0$ in $[0,+\infty)$.
Namely,
\beq\lab{eq:20210922-e10}
u(r)g(u(r))\equiv 2^* G(u(r)), \,  r\in [0,+\infty).
\eeq
By the Hospital's rule, \eqref{eq:20210922-e7} and \eqref{eq:20210922-e10} can lead to
\begin{align*}
0<b:=&\lim_{s\to 0^+}\frac{sg(s)}{s^{\gamma}}=\lim_{s\to 0^+}\frac{\gamma G(s)}{s^{\gamma}}
=\lim_{r\to\infty}\frac{\gamma G(u(r))}{(u(r))^{\gamma}}\\
=&\lim_{r\to \infty}\frac{\frac{\gamma}{2^*} g(u(r))u(r)}{(u(r))^{\gamma}}
=\lim_{s\to 0^+}\frac{\frac{\gamma}{2^*} g(s)s}{s^{\gamma}}
=\frac{\gamma}{2^*} b,
\end{align*}
which implies that $\gamma=2^*$, a contradiction.
\ep

In view of Theorems \ref{thm:20210903-th1} and \ref{thm:20210903-th2} for the search of positive radial solutions to \eqref{eq:NLS} in $H^1(\RN)$ we shall focus on the case where $\lambda >0$.\\

Finally, for future reference we state a Proposition and recall some classical results.
\bo\lab{lemma:20210820-kernel}
Let $N \geq 1$, $1 < p < 2^* -1$, $\mu >0$. Then there exists a unique positive radial solution  $U_p^{\mu} \in H^1(\RN)$ to
\beq\lab{eq:l1}
- \Delta u + u = \mu u^{p}, \quad u \in H^1(\RN).
\eeq
Moreover,  the linearization of \eqref{eq:l1} at $U_p^{\mu}$, $\phi \mapsto - \Delta \phi + \phi -  \mu p (U_p^{\mu})^{p-1} \phi$, has a null kernel in $H_{rad}^1(\RN)$.
\eo
\bp
These results follow from the corresponding results due to \cite{Kwong1989} on the unique positive solution $U_p \in H^1(\RN)$ to
\beq\lab{eq:l2}
- \Delta u + u =  u^{p}
\eeq
after having observed that $U_p$ is a positive solution to \eqref{eq:l2} if and only if $U_p^{\mu}:= \mu^{\frac{1}{1- p}} U_{p}$ is a positive solution to \eqref{eq:l1}.
\ep
\begin{definition}\label{def-mptype}(see \cite[Definition 1]{Hofer84}).
Let $U$ be a nonempty open subset of a Banach space $F$ and $\Phi \in C^1(U, \mathbb{R})$. Suppose $u_0$ is a critical point of $\Phi$  at some level $d \in \mathbb{R}$. We say that $u_0$ is of mountainpass-type (mp-type) if for all open neighborhoods $W \subset U$ of $u_0$ the topological space $W \cap \{u \in F : \Phi(u) < d\}$ is nonempty and not path-connected.
\end{definition}
Then we have, see \cite[Theorem 2]{Hofer84}.
\bt\label{Hofer}
We assume that the condition $(\Phi)$ hold:
\begin{itemize}
\item[$(\Phi)$] Let $F$ be a real Hilbert space and $\Phi \in C^2(F, \mathbb{R})$, and assume the gradient $\Phi'$ of $\Phi$ has the form identity-compact. Furthermore, suppose that for all critical points $u_0$ of $\Phi$ the first eigenvalue $\lambda_1$ of the linearisation $\Phi''(u_0)$ at $u_0$ is simple provided $\lambda_1 \leq 0$.\medskip
\end{itemize}
\noindent
Then, if $u_0 \in U$ is an isolated critical point of $\Phi$ of mp-type, the local degree at $u_0$ is $-1.$
\et

Finally, we recall the following result,  due to Leray-Schauder, see \cite[Theorem 4.3.4]{Ambrosetti-Arcoya-11}.
\bo\lab{Proposition-degree}
Assume that $X$ is a real Banach space, $\Omega$ is a bounded, open subset of $X$ and $\Phi : [a,b] \times \overline{\Omega} \mapsto X$ is given by $\Phi(\lambda,u) = u - T(\lambda,u)$ with $T$ a compact map. Suppose also that
$$\Phi(\lambda, u) = u - T(\lambda, u) \neq 0, \quad \forall (\lambda, u) \in [a,b] \times \partial \Omega.$$
If,
$$deg(I - T(a, \cdot), \Omega, 0) \neq 0,$$
then there exists a compact connected set
 $\mathcal{C} \in \Sigma$ such that
$$\mathcal{C} \cap (\{a\} \times \Sigma_a) \neq \emptyset  \quad \mbox{and} \quad \mathcal{C} \cap (\{b\} \times \Sigma_b) \neq \emptyset.$$
Here,
$$\Sigma = \{ (\lambda, u) \in [a,b] \times \overline{\Omega} : \Phi(\lambda,u) =0 \}$$
and $\Sigma_{\lambda}$ denotes the $\lambda$-slice of $\Sigma$, i.e.
$$\Sigma_{\lambda}= \{u \in \overline{\Omega} : (\lambda, u) \in \Sigma \}.$$
\eo

\section{A priori bounds and compactness results}\label{sec:compactness}
\numberwithin{equation}{section}

For $0<\Lambda_1\leq \Lambda_2<+\infty$, we define the set
\beq\lab{eq:20210824-e2}
\mathcal{U}_{\Lambda_1}^{\Lambda_2}:=\left\{u\in H^1_{rad}(\RN): \hbox{$u $ is a non-negative solution to \eqref{eq:NLS} with $\lambda\in [\Lambda_1,\Lambda_2]$}\right\}
\eeq
and recall that $C_{r,0}(\RN)$ denotes the space of continuous radial functions vanishing at infinity. In view of Remark \ref{remark:20210823-r1}, $\mathcal{U}_{\Lambda_1}^{\Lambda_2} \subset C_{r,0}(\RN)$.

\bl\lab{lemma:prior-estimate1}
Let $N \geq 1$ and assume that (G1)-(G2) hold.
\begin{itemize}
\item[(i)] For any $M>0$, there exists $ C_M>0$ such that, for any non-negative solution $u \in H^1(\mathbb{R}^N)$ to \eqref{eq:NLS} with $\lambda\in (0,M)$,
$$\max_{x\in \RN}u(x)\leq C_M.$$
\item[(ii)] Let $0 < \Lambda_1 \leq \Lambda_2 < + \infty$. Then the set $\mathcal{U}_{\Lambda_1}^{\Lambda_2}$ is compact in $C_{r,0}(\RN)$.
\end{itemize}
\el

\begin{proof}

(i)
We  proceed  by contradiction, assuming there exists a sequence
$(\lambda_n, u_n) \in (0, M) \times H^1(\mathbb{R}^N) $ where $u_n \in H^1(\RN)$ is solution to \eqref{eq:NLS} with $\lambda=\lambda_n$ and
\begin{align*}
\max_{x\in \RN}u_n(x)\rightarrow +\infty \quad \mbox{as} \, n\rightarrow +\infty.
\end{align*}
We follow a blow up procedure introduced by Gidas and Spruck \cite{GidasSpruck1981}.
Without loss of generality, we may assume that
$\displaystyle M_n:=u_n(0)=\max_{x\in\RN}u_n(x)$.
Now we perform a rescaling, setting $x=\frac{y}{M_{n}^{\frac{\beta-2}{2}}}$ and defining
$\tilde{u}_n:\RN\rightarrow \mathbb{R}$ by
$$\tilde{u}_n(y):=\frac{1}{M_n}u_n\left(\frac{y}{M_{n}^{\frac{\beta-2}{2}}}\right).$$
Then $\displaystyle \tilde{u}_n(0)=\max_{y\in \RN}\tilde{u}_n(y)=1$ and
\begin{equation*}
-\Delta \tilde{u}_n=\frac{g(M_n \tilde{u}_n) }{M_{n}^{\beta-1}}-\frac{\lambda_n}{M_{n}^{\beta-2}}\tilde{u}_n.
\end{equation*}
If $\alpha\leq \beta,$ since $ \|\tilde{u}_n\|_\infty=1$ and $M_n\rightarrow +\infty$, recalling Lemma \ref{20210822-wl1}-(ii), we see that
\begin{equation*}
\frac{g(M_n \tilde{u}_n) }{M_{n}^{\beta-1}}\leq C\left(M_{n}^{\alpha-\beta} \tilde{u}_{n}^{\alpha-1}+\tilde{u}_{n}^{\beta-1}\right)\leq C\left(\tilde{u}_{n}^{\alpha-1}+\tilde{u}_{n}^{\beta-1}\right)\in L^\infty(\RN).
\end{equation*}
If $\alpha>\beta$, we also have that
\begin{equation*}
\frac{g(M_n \tilde{u}_n) }{M_{n}^{\beta-1}}\leq  \frac{C(M_n \tilde{u}_n)^{\beta-1}}{M_{n}^{\beta-1}}=C \tilde{u}_{n}^{\beta-1}\in L^\infty(\RN).
\end{equation*}
We also remark that $\frac{\lambda_n}{M_{n}^{\beta-2}}\tilde{u}_n\in L^\infty(\RN)$.
So, applying standard elliptic estimates, and passing to a subsequence if necessary, we may assume that $\tilde{u}_n\rightarrow \tilde{u}$ in $C_{loc}^{2}(\RN)$,
where $\tilde{u}$ is a nontrivial and non-negative bounded  radial solution to
\begin{equation*}
-\Delta \tilde{u}=\mu_2 \tilde{u}^{\beta-1}~\hbox{in}~\RN.
\end{equation*}
At this point Theorem \ref{thm:20210903-th2} provides a contradiction.

(ii)
Note that a bounded set $\mathcal{A}\subset C_{r,0}(\RN)$ is pre-compact if and only if $\mathcal{A}$ is equi-continuous on bounded sets and decay uniformly at infinity.
Firstly,  by a standard regularity argument we can check that the set $\mathcal{U}_{\Lambda_1}^{\Lambda_2}$ is  bounded in $C^2(\RN)$.  To  prove the uniform decay we argue by contradiction and assume that there exists a $\varepsilon>0$, that can be assumed as arbitrarily small, and sequences $\{u_n\}\subset C_{r,0}(\RN)$ and $r_n\rightarrow +\infty$ such that $u_n(r_n)=\varepsilon$ and $u_n$ solves \eqref{eq:NLS} with $\lambda=\lambda_n$.
Put $\bar{u}_n(r):=u_n(r+r_n)$, then
\begin{equation*}
-\left(\bar{u}''_n+\frac{(N-1)}{r+r_n} \bar{u}'_n\right)=-\lambda_n \bar{u}_n +g(\bar{u}_n), \,  r>-r_n.
\end{equation*}
Passing to subsequences (still denoted by $\lambda_n$ and $\bar{u}_n$) and then taking the limits, we get that $\lambda_n\rightarrow \lambda^*>0$ and that $\{u_n\}$ converges to $\bar{u}$, a  nontrivial solution  of the following equation
\begin{equation}\lab{eq:20211204-e1}
-\bar{u}''=-\lambda^* \bar{u}+g(\bar{u})~\hbox{in}~\mathbb{R}
\end{equation}
 with $\bar{u}(0)=\varepsilon, \bar{u}\geq 0$ and bounded. By Lemma \ref{lemma:20210823-l1}, $\bar{u}_n(r)$ is decreasing in $[-r_n,+\infty)$ and thus $\bar{u}$ is bounded and decreasing in $\mathbb{R}$. Hence, $\bar{u}(r)$ has a limit $\bar{u}_+$ at $r=+\infty$ and a limit $\bar{u}_-$ at $r=-\infty$. In particular, $0\leq \bar{u}_+\leq \bar{u}(r)\leq \bar{u}_-<+\infty, \forall r\in \mathbb{R}$ and $\bar{u}_-\geq \bar{u}(0)=\varepsilon>0$.
Here $\bar{u}_\pm$ satisfies
$$-\lambda^* \bar{u}_\pm+g(\bar{u}_\pm)=0.$$
Since $\lambda >0$ is bounded away from $0$, we have that $\lambda^*>0$. Under the assumption (G2), taking $\varepsilon >0$ smaller if necessary, we can assume that $\frac{g(s)}{s}<\lambda^*$ for $s\in (0,\varepsilon]$, which implies that $\bar{u}_+=0$.
Put $f(s):=-\lambda^* s+g(s)$ and $F(s):=\int_0^s f(t)dt$.
 Noting that $\displaystyle \lim_{t\rightarrow +\infty}\bar{u}'(t)=0$ and $\displaystyle \lim_{t\rightarrow +\infty}F(\bar{u}(t))=0$,
we have that
\beq\lab{eq:20211204-be2}
\frac{1}{2}\bar{u}'(r)^2=\int_{r}^{+\infty} -\bar{u}''(t)\bar{u}'(t)dt
=\int_{r}^{+\infty}f(\bar{u}(t))\bar{u}'(t)dt
=-F(\bar{u}(r)), \forall r\in \mathbb{R}.
\eeq
Under the assumptions (G1)-(G2), by \cite[Theorem 5]{BerestyckiLions1983}, there exist a unique solution $v$ (up to a translation) to the following equation
\beq\lab{eq:20211204-e2}
-v''=f(v)~\hbox{in}~\mathbb{R}, v\in C^2(\mathbb{R}),\lim_{r \rightarrow \pm \infty}v(r)=0  ~\hbox{and $v(r_0)>0$ for some $r_0\in \mathbb{R}$}.
\eeq
Without loss of generality, we suppose that $v(0)=\max_{r\in \mathbb{R}}v(r)$, then
$$\begin{cases}
v(r)=v(-r);\\
v(r)>0,r\in \mathbb{R};\\
v(0)=\xi_0;\\
v'(r)<0,r>0,
\end{cases}$$
where $\xi_0>0$ is determined by
$$\xi_0:=\inf\left\{\xi>0:F(\xi)=0\right\},$$
see \cite[Theorem 5]{BerestyckiLions1983} again.
By our choice of $\varepsilon >0$, we see that $f(s)<0, s\in (0,\varepsilon]$, and thus $\varepsilon<\xi_0$.
So there exists some $r_0>0$ such that $v(r_0)=\varepsilon$. Now, we let $\tilde{v}(r):=v(r+r_0)$, then
\beq\lab{eq:20211204-e3}
-\tilde{v}''=f(\tilde{v}) ~\hbox{in}~\mathbb{R},~\tilde{v}(0)=\varepsilon.
\eeq
Furthermore, noting that $\displaystyle\lim_{r\rightarrow +\infty}\tilde{v}(r)=0$, applying a similar argument as that in \eqref{eq:20211204-be2}, we conclude that
\beq\lab{eq:20211204-e4}
\tilde{v}'(r)=\begin{cases}
-\sqrt{-2F(\tilde{v}(r))}, \forall r\geq -r_0,\\
\sqrt{-2F(\tilde{v}(r))}, \forall r< -r_0.
\end{cases}
\eeq
Hence, both $\bar{u}$ and $\tilde{v}$ solve
\beq\lab{eq:20211204-e5}
\begin{cases}
-u''(r)=f(u(r))~\hbox{in}~\mathbb{R},\\
u(0)=\varepsilon,\\
u'(0)=-\sqrt{-2F(\varepsilon)}.
\end{cases}
\eeq
By the uniqueness of solutions of initial value problem, we conclude $\bar{u}\equiv \tilde{v}$ in $\mathbb{R}$. Thus,
$$\bar{u}_-=\lim_{r\rightarrow -\infty}\bar{u}(r)=\lim_{r\rightarrow -\infty}\tilde{v}(r)=0,$$
a contradiction to $\bar{u}_{-} \geq\varepsilon>0$.
\end{proof}

\bc\lab{cro:20210824-c1}
Let $N \geq 1$ and assume that (G1)-(G2) hold. Let  $0 < \Lambda_1 \leq \Lambda_2 < + \infty$. Then the set $\mathcal{U}_{\Lambda_1}^{\Lambda_2}$  is compact in $H^1(\RN)$.
\ec
\begin{proof}
By Lemma \ref{lemma:prior-estimate1}-(ii), the set $\mathcal{U}_{\Lambda_1}^{\Lambda_2}$ is compact in $C_{r,0}(\RN)$. So by (G2), we can find some $R>0$ large such that
$$\frac{g(u(x))}{u(x)}<\frac{\Lambda_1}{2}, \, \forall \mid x\mid\geq R, \, \forall u\in \mathcal{U}_{\Lambda_1}^{\Lambda_2}.$$
Then it follows that
\begin{equation*}
-\Delta u+\frac{\Lambda_1}{2} u\leq 0, \, \forall \mid x\mid\geq R, \, \forall u\in \mathcal{U}_{\Lambda_1}^{\Lambda_2}
\end{equation*}
and combining with the compactness of $\mathcal{U}_{\Lambda_1}^{\Lambda_2}$  in $C_{r,0}(\RN)$, there exists some $C>0$ so that
$$u(x)\leq C e^{-\frac{\Lambda_1}{2}\mid x\mid}, \, \forall \mid x\mid\geq R, \, \forall u\in \mathcal{U}_{\Lambda_1}^{\Lambda_2}.$$
Recalling Lemma \ref{lemma:prior-estimate1}-(ii), we can modify $C >0$ such that
$$u(x)\leq C e^{-\frac{\Lambda_1}{2}\mid x\mid}~\hbox{in}~\RN, \forall u\in \mathcal{U}_{\Lambda_1}^{\Lambda_2}$$
from which the boundedness of $\mathcal{U}_{\Lambda_1}^{\Lambda_2}$ in $H^1(\RN)$ follows. Now, for any sequence $\{u_n\}\subset \mathcal{U}_{\Lambda_1}^{\Lambda_2}$, we may assume that $u_n\rightharpoonup u$ in $H^1(\RN)$ and $\lambda_n\rightarrow \lambda^*\in [\Lambda_1,\Lambda_2]$.  In particular, $u$ is a positive radial solution to
 \beq\lab{eq:20210819-ze2}
 -\Delta u+\lambda^* u=g(u) \quad \mbox{in}\, \, H^1(\RN).
 \eeq
 Noting that Lemma \ref{20210822-wl1}-(ii) implies that  $g(u)u\leq C(\mid u\mid^\alpha+\mid u\mid^\beta)$, by the uniform exponential decay of $\{u_n\}$ and the Lebesgue dominated convergence theorem, we observe that
 \beq\lab{eq:20210819-ze3}
 \int_{\RN}g(u_n)u_n \, dx\rightarrow \int_{\RN}g(u)u \, dx \quad \mbox{as} \, n\rightarrow +\infty.
 \eeq
Hence, by \eqref{eq:20210819-ze2}-\eqref{eq:20210819-ze3}, we obtain that
\beq\lab{eq:20210819-ze4}
\|\nabla u_n\|_2^2+\lambda_n\|u_n\|_2^2\rightarrow \|\nabla u\|_2^2+\lambda^*\|u\|_2^2.
\eeq
Recalling that $\lambda_n\rightarrow \lambda^*\geq \Lambda_1>0, u_n\rightharpoonup u$ in $H^1(\RN)$, \eqref{eq:20210819-ze4}  implies that
 $u_n\rightarrow u$ in $H^1(\RN)$. That is, $\mathcal{U}_{\Lambda_1}^{\Lambda_2}$ is compact in $H^1(\RN)$.
\end{proof}

For future reference we also observe

\bl\lab{lemma:no-bifurcation}
Let $N \geq 2$ and assume that (G1)-(G2) hold. For any $0<\Lambda_1\leq \Lambda_2<+\infty$, let $\mathcal{U}_{\Lambda_1}^{\Lambda_2}$ be given by \eqref{eq:20210824-e2}.  Then there exists some $\delta_{\Lambda_1,\Lambda_2}>0$ such that
$$\|u\|_{H^1(\RN)}^{2}\geq \delta_{\Lambda_1,\Lambda_2}, \, \forall u\in \mathcal{U}_{\Lambda_1}^{\Lambda_2}.$$
\el
\begin{proof}
Since any $u\in \mathcal{U}_{\Lambda_1}^{\Lambda_2}$, solves \eqref{eq:NLS} for some $\lambda\in [\Lambda_1,\Lambda_2]$, using  Lemma \ref{20210822-wl1}-(ii),
\begin{align*}
\min\{1,\Lambda_1\} \|u\|_{H^1(\RN)}^{2} \leq& \|\nabla u\|_2^2+\Lambda_1\|u\|_2^2
\leq \|\nabla u\|_2^2+\lambda\|u\|_2^2\\
\leq&\int_{\RN}g(u)u dx
\leq C \left(\|u\|_\alpha^\alpha+\|u\|_\beta^\beta\right)\\
\leq& C \left(\|u\|_{H^1(\RN)}^{\alpha}+\|u\|_{H^1(\RN)}^{\beta}\right),
\end{align*}
which implies the existence of $\delta_{\Lambda_1,\Lambda_2} >0$.
\end{proof}

\section{Asymptotic behaviors of positive solutions for $\lambda\rightarrow 0^+$ and $\lambda\rightarrow +\infty$}\label{sec:behavior}
\numberwithin{equation}{section}

\subsection{The case of $\lambda\rightarrow 0^+$}

\bl\lab{lemma:20210820-l1}
Let $N \geq 1$ and assume that (G1)-(G2) hold. Let $\{u_n\}_{n=1}^{\infty}\subset H^1(\RN)$ be positive solutions to \eqref{eq:NLS} with $\lambda=\lambda_n\rightarrow 0^+$. Then
$$\limsup_{n\rightarrow +\infty}\|u_n\|_\infty<+\infty.$$
\el
\begin{proof}
It follows by Lemma \ref{lemma:prior-estimate1}-(i).
\end{proof}

\bl\lab{lemma:20210820-l2}
Let $N \geq 1$ and assume that (G1)-(G2) hold. Let  $\{u_n\}_{n=1}^{\infty}\subset H^1(\RN)$ be positive solutions to \eqref{eq:NLS} with $\lambda=\lambda_n\rightarrow 0^+$. Then
\begin{equation*}
\liminf_{n\rightarrow +\infty}\frac{\|u_n\|_{\infty}^{\alpha-2}}{\lambda_n}>0.
\end{equation*}
Furthermore, if (G3) holds, then
\beq\lab{eq:20210820-e2}
\|u_n\|_\infty\rightarrow 0~\hbox{as}~n\rightarrow +\infty.
\eeq
\el
\begin{proof}
For any sequence $(\lambda_n,u_n)$ with $\lambda_n\rightarrow 0^+$ and
\beq\lab{eq:20210823-e1}
-\Delta u_n+\lambda_n u_n=g(u_n) ~\hbox{in}~\RN, \, 0<u_n\in H^{1}(\RN),
\eeq
thanks to Lemma \ref{lemma:20210820-l1} and the invariance of \eqref{eq:NLS} under translation, we may suppose that $u_n(0)=\|u_n\|_\infty\leq M, \forall n\in \mathbb{N}$.
Setting
$$\bar{u}_n(x):=\frac{1}{u_n(0)}u_n\left(\frac{x}{\sqrt{\lambda_n}}\right),$$
we have $\bar{u}_n(0)=\|\bar{u}_n\|_\infty=1$ and
\beq\lab{eq:20210823-e2}
-\Delta \bar{u}_n+\bar{u}_n=\frac{1}{\lambda_n u_n(0)}g(u_n(0)\bar{u}_n)~\hbox{in}~\RN.
\eeq
By standard regularity arguments, $\bar{u}_n$ is a classical solution to \eqref{eq:NLS}. Taking $x=0$, we obtain
\begin{align*}
1=&\bar{u}_n(0)\leq -\Delta \bar{u}_n (0)+\bar{u}_n(0)\quad \hbox{(by maximum principle)}\\
=&\frac{1}{\lambda_n u_n(0)}g(u_n(0))
\leq C_{M,1}\frac{u_n(0)^{\alpha-2}}{\lambda_n}\quad \hbox{(by Lemma \ref{20210822-wl1}-(ii))}.
\end{align*}
Hence,
$$\liminf_{n\rightarrow +\infty}\frac{\|u_n\|_{\infty}^{\alpha-2}}{\lambda_n}>\frac{1}{C_{M,1}}>0.$$
Suppose now that $\displaystyle\liminf_{n\rightarrow +\infty}u_n(0)>0$. Since  $g(u_n)-\lambda_nu_n\in L^\infty(\RN)$ in \eqref{eq:20210823-e1}, passing to a subsequence, we may assume that $u_n\rightarrow u$ in $C_{loc}^{2}(\RN)$ with $u(0)=\max_{x\in \RN}u(x)>0$ and
$u$  a non-negative bounded radial decreasing function which solves
$$-\Delta u=g(u), ~\hbox{in}~\RN.$$
Hence, by (G3), we obtain that $u\equiv 0$, a contradiction to $u(0)>0$.
\end{proof}

\bl\lab{lemma:20210820-l3}
Let $N \geq 1$ and assume that (G1)-(G3) hold. Let  $\{u_n\}_{n=1}^{\infty}\subset H^{1}(\RN)$ be positive solutions to \eqref{eq:NLS} with $\lambda=\lambda_n\rightarrow 0^+$. Then
\begin{equation*}
\limsup_{n\rightarrow +\infty}\frac{\|u_n\|_{\infty}^{\alpha-2}}{\lambda_n}<+\infty.
\end{equation*}
\el
\begin{proof}
We argue by contradiction and assume there exists a sequence $\{u_n\}_{n=1}^{\infty}\subset H^{1}(\RN)$ of positive solutions to \eqref{eq:NLS} with $\lambda=\lambda_n\rightarrow 0^+$ such that
$$\displaystyle \frac{\|u_n\|_{\infty}^{\alpha-2}}{\lambda_n}\rightarrow +\infty.$$
Without loss of generality, we suppose that $u_n(0)=\|u_n\|_\infty, \forall n\in \mathbb{N}$. Setting
$$\tilde{u}_n(x):=\frac{1}{\|u_n\|_\infty}u_n\left(\frac{x}{\|u_n\|_{\infty}^{\frac{\alpha-2}{2}}}\right),$$
we have $\tilde{u}_n(0)=\|\tilde{u}_n\|_\infty=1$ and
\beq\lab{eq:20210823-e3}
-\Delta \tilde{u}_n=\frac{g(\|u_n\|_{\infty}\tilde{u}_n)}{\|u_n\|_{\infty}^{\alpha-1}}-\frac{\lambda_n}{\|u_n\|_{\infty}^{\alpha-2}} \tilde{u}_n.
\eeq
In view of Lemmas \ref{20210822-wl1} and \ref{lemma:20210820-l1}, the right hand side of \eqref{eq:20210823-e3} is in $L^\infty(\RN)$. So, passing to a subsequence if necessary, we may assume that $\tilde{u}_n\rightarrow \tilde{u}$ in $C_{loc}^{2}(\RN)$. Using \eqref{eq:20210820-e2}, we obtain that $\tilde{u}$ is a non-negative bounded solution to
\begin{equation*}
-\Delta \tilde{u}=\mu_1 \tilde{u}^{\alpha-1}~\hbox{in}~\RN.
\end{equation*}
Here again, Theorem \ref{thm:20210903-th2} provides a contradiction to $\tilde{u}(0)=\|\tilde{u}\|_\infty=1$.
\end{proof}

\bl\lab{lemma:20210820-l4}
Let $N \geq 1$ and assume that (G1)-(G3) hold. Let  $\{u_n\}_{n=1}^{\infty}\subset H^1(\RN)$ be positive radial solutions to \eqref{eq:NLS} with $\lambda=\lambda_n\rightarrow 0^+$.
Define
\beq\lab{eq:20210820-e4}
\bar{u}_n(x):=\frac{1}{u_n(0)}u_n\left(\frac{x}{\sqrt{\lambda_n}}\right),
\eeq
then $\bar{u}_n(x)\rightarrow 0$ as $\mid x\mid\rightarrow +\infty$ uniformly in $n\in \mathbb{N}$.
\el
\begin{proof}
By Lemmas \ref{lemma:20210820-l2} and  \ref{lemma:20210820-l3}, up to a subsequence, we may assume that
$$\frac{\|u_n\|_{\infty}^{\alpha-2}}{\lambda_n}\rightarrow C_0>0.$$
Take $\varepsilon>0$ small such that $C_0 \mu_1 \varepsilon^{\alpha-1}<1$. We argue by contradiction and suppose there exists a sequence $r_n\rightarrow +\infty$ such that $\bar{u}_n(r_n)=\varepsilon$. By changing the origin to $r_n$ and passing to the limit, we obtain a nontrivial solution $\bar{u}$ of the following equation,
\begin{equation}\lab{eq:20231113-e1}
-\bar{u}''=-\bar{u}+C_0\mu_1\bar{u}^{\alpha-1}, \;r\in \mathbb{R},
\end{equation}
with $\bar{u}(0)=\varepsilon, \bar{u}\geq 0$ and bounded. By Lemma \ref{lemma:20210823-l1}, we obtain that $\bar{u}$ is decreasing on $\mathbb{R}$. Hence, $\bar{u}$ has a limit $\bar{u}_+$ at $r=+\infty$ and a limit $\bar{u}_-$ at $r=-\infty$. In particular, $\bar{u}_\pm$ solve
$$-\bar{u}_\pm+C_0\mu_1 \bar{u}_{\pm}^{\alpha-1}=0.$$
So by $\bar{u}_+<\varepsilon\leq \bar{u}_-$, we obtain that $\bar{u}_+=0$ and $\bar{u}_-=(C_0\mu_1)^{\frac{1}{2-\alpha}}$. Then, since from (\ref{eq:20231113-e1}), we have that $-\bar{u}''\leq 0$ on $\mathbb{R}$ necessarily $\bar{u}'(0) <0$ and using again that $-\bar{u}''\leq 0$ on $(- \infty, 0]$ we get a contradiction with the fact that $\bar{u}$ is bounded.
\end{proof}

\bt\lab{thm:20210820-th1}{\bf (The behavior in the sense of $C_{r,0}(\RN)$ as $\lambda\rightarrow 0^+$)}
Let $N \geq 1$ and assume that (G1)-(G3) hold. Let  $\{u_n\}_{n=1}^{\infty}$ be positive radial solutions to \eqref{eq:NLS} with $\lambda=\lambda_n\rightarrow 0^+$.  Defining
\beq\lab{eq:20210820-e5}
v_n(x):=\lambda_{n}^{\frac{1}{2-\alpha}}u_n\left(\frac{x}{\sqrt{\lambda_n}}\right),
\eeq
and passing to a subsequence if necessary we have that $v_n\rightarrow U$ in $C_{r,0}(\RN)$ as $n\rightarrow +\infty$, where $U$ is given by Theorem \ref{cro:20210824-xc1}.
\et

\begin{proof}
By Lemmas \ref{lemma:20210820-l2}  and \ref{lemma:20210820-l3}, we have
\beq\lab{eq:20210824-xe1}
0<\liminf_{n\rightarrow \infty}\frac{u_n(0)^{\alpha-2}}{\lambda_n}\leq \limsup_{n\rightarrow \infty}\frac{u_n(0)^{\alpha-2}}{\lambda_n}<+\infty.
\eeq
Noting that $v_n$ solves the equation
\beq\lab{eq:20210824-xe2}
-\Delta v_n+v_n=\frac{g(\lambda_{n}^{\frac{1}{\alpha-2}}v_n)}{\lambda_{n}^{\frac{\alpha-1}{\alpha-2}}}~\hbox{in}~\RN
\eeq
by standard regularity argument, it is easy to see that $\{v_n\}$ is equi-continuous on bounded sets. On the other hand, we remark that
$v_n(x)= u_n(0) \lambda_{n}^{\frac{-1}{\alpha-2}} \bar{u}_n(x)$,  where $\bar{u}_n(x)$ is given by \eqref{eq:20210820-e4}. So, by Lemma \ref{lemma:20210820-l4} and \eqref{eq:20210824-xe1}, we see that $\{v_n\}$ decay to $0$ uniformly at $\infty$. Hence, $\{v_n\}$ is pre-compact in $C_{r,0}(\RN)$. Passing to a subsequence if necessary, we may assume that $v_n\rightarrow U\in C_{r,0}(\RN)$  where $U\in C_{r,0}(\RN)$ solves \eqref{eq:20210820-e6L} with $U(0)=\max_{x\in \RN}U(x)>0$.
\end{proof}

In order to apply these results to the study of the existence of solutions with prescribed mass, we also need to explicit the behavior of these solutions in $L^2(\RN)$ as $\lambda\rightarrow 0^+$.

\bt\lab{thm:20210820-th2}{\bf (The behavior in the sense of $H^1(\RN)$ as $\lambda\rightarrow 0^+$)}
Let $N \geq 1$ and assume that (G1)-(G3) hold. Let $\{u_n\}, \{\lambda_n\},\{v_n\}$ and $U$ be given by Theorem \ref{thm:20210820-th1}. Then we also have
$$v_n\rightarrow U~\hbox{in}~H^1(\RN).$$
\et
\begin{proof}
We can rewrite \eqref{eq:20210824-xe2} as
\begin{equation*}
-\Delta v_n+\left(1-\frac{g(\lambda_{n}^{\frac{1}{\alpha-2}}v_n)}{\lambda_{n}^{\frac{\alpha-1}{\alpha-2}}v_n}\right)v_n=0~\hbox{in}~\RN.
\end{equation*}
By the proof of Theorem \ref{thm:20210820-th1}, we know that $\{v_n\}$ is pre-compact in $C_{r,0}(\RN)$. So, in view of Lemma \ref{20210822-wl1}, we can find some $R >0$ large enough such that
$$1-\frac{g(\lambda_{n}^{\frac{1}{\alpha-2}}v_n)}{\lambda_{n}^{\frac{\alpha-1}{\alpha-2}}v_n}>\frac{1}{2}, \, \forall \mid x\mid\geq R, \, \forall n\in \mathbb{N}.$$
Then we have
\begin{equation*}
-\Delta v_n(x)+\frac{1}{2}v_n(x)\leq 0~, \forall \mid x\mid\geq R, \, \forall n\in \mathbb{N}.
\end{equation*}
It follows by the comparison principle and since, see Theorem \ref{thm:20210820-th1},$v_n \to U$ in $C_{r,0}(\RN)$ that one can find some $C_1,C_2>0$ such that
\begin{equation*}
v_n(x)\leq C_1 e^{-C_2\mid x\mid}~\hbox{in}~\RN, \, \forall n\in \mathbb{N}.
\end{equation*}
On the other hand, by Lemma \ref{20210822-wl1}-(i),
\begin{align*}
\|v_n\|_{H^1(\RN)}^{2}=&\int_{\RN}\frac{g(\lambda_{n}^{\frac{1}{\alpha-2}}v_n)}{\lambda_{n}^{\frac{\alpha-1}{\alpha-2}}}v_n \, dx
=(\mu_1+o(1))\|v_n\|_\alpha^\alpha,
\end{align*}
which implies that $\{v_n\}$ is bounded in $H^1(\RN)$.
Without loss of generality, we assume that $v_n\rightharpoonup \tilde{U}$ in $H^1(\RN)$.
By the Lebesgue dominated convergence theorem, we see that
\begin{equation*}
v_n\rightarrow \tilde{U}~\hbox{in}~L^p(\RN), \quad \forall p\in [1,+\infty).
\end{equation*}
Furthermore, since $\{v_n\}$ is pre-compact in $C_{r,0}(\RN)$ and $\lambda_n\rightarrow 0^+$, by Lemma \ref{20210822-wl1}-(i), we have that
$$\frac{g(\lambda_{n}^{\frac{1}{\alpha-2}}v_n)}{\lambda_{n}^{\frac{\alpha-1}{\alpha-2}}}v_n
=(\mu_1+o(1))v_{n}^{\alpha}, \quad a.e. ~\hbox{in}~\RN.$$
So $\tilde{U}$ solves
$$-\Delta \tilde{U}+\tilde{U}=\mu_1 \tilde{U}^{\alpha-1}~\hbox{in}~\RN \quad \mbox{with} \, \lim_{\mid x\mid\rightarrow +\infty}\tilde{U}(x)=0.$$
Hence $\tilde{U}=U$ and thus
\begin{align*}
\|v_n\|_{H^1(\RN)}^{2}-\|U\|_{H^1(\RN)}^{2}
=&\int_{\RN}\frac{g(\lambda_{n}^{\frac{1}{\alpha-2}}v_n)}{\lambda_{n}^{\frac{\alpha-1}{\alpha-2}}}v_n \,dx
-\mu_1\int_{\RN}U^\alpha \, dx\\
=&\mu_1\left(\|v_n\|_\alpha^\alpha -\|U\|_\alpha^\alpha\right)+o(1)=o(1),
\end{align*}
which implies that $v_n\rightarrow U$ in $H^1(\RN)$.
\end{proof}

\subsection{The case of $\lambda\rightarrow +\infty$}
\bl\lab{lemma:20210820-l5}
Let $N \geq 1$ and assume that (G1)-(G2) hold.  Let $\{u_n\}_{n=1}^{\infty}\subset H^{1}(\RN)$ be positive solutions to \eqref{eq:NLS} with $\lambda=\lambda_n\rightarrow +\infty$. Then
\beq\lab{eq:20210820-e7}
\liminf_{n\rightarrow +\infty}\|u_n\|_\infty=+\infty,
\eeq
and
\beq\lab{eq:20210820-e8}
\liminf_{n\rightarrow +\infty}\frac{\|u_n\|_{\infty}^{\beta-2}}{\lambda_n}>0.
\eeq
\el

\begin{proof}
By regularity, for any fixed $n$, $u_n\in C^2(\RN)$ and we may suppose that $u_n(0)=\|u_n\|_{\infty}$. We let $\bar{u}_n$ be given as in the proof of Lemma \ref{lemma:20210820-l2} and thus \eqref{eq:20210823-e2} holds. Taking $x=0$, by Lemma \ref{20210822-wl1}, we obtain that
\begin{align*}
1=&\bar{u}_n(0)\leq -\Delta \bar{u}_n(0)+\bar{u}_n(0)
\leq \frac{1}{\lambda_n u_n(0)} g(u_n(0))\\
\leq & C \, \frac{u_n(0)^{\alpha-2}+u_n(0)^{\beta-2}}{\lambda_n}.
\end{align*}
Since $\lambda_n\rightarrow +\infty$ and $\min\{\alpha,\beta\}>2$, we obtain \eqref{eq:20210820-e7}.
Furthermore, if $\alpha\leq \beta$, \eqref{eq:20210820-e8} trivially holds. If $\alpha>\beta$, by Lemma \ref{20210822-wl1}-(ii), $g(s)\leq Cs^{\beta-1}, s\in \mathbb{R}^+$, and we also obtain \eqref{eq:20210820-e8}.
\end{proof}

\bl\lab{lemma:20210820-l6}
Let $N \geq 1$ and assume that (G1)-(G2) hold. Let $\{u_n\}_{n=1}^{\infty}\subset H^{1}(\RN)$ be positive solutions to \eqref{eq:NLS} with $\lambda=\lambda_n\rightarrow +\infty$. Then
$$
\limsup_{n\rightarrow +\infty}\frac{\|u_n\|_{\infty}^{\beta-2}}{\lambda_n}<+\infty.
$$
\el
\begin{proof}
As in the proof of Lemma \ref{lemma:20210820-l3}, we  argue by contradiction and assume there exists a sequence
$\{u_n\}_{n=1}^{\infty}\subset H^{1}(\RN)$ of positive solutions to \eqref{eq:NLS} with $\lambda=\lambda_n\rightarrow +\infty$ such that $$  \frac{\|u_n\|_{\infty}^{\beta-2}}{\lambda_n}\rightarrow +\infty.$$
 Without loss of generality, we suppose that $u_n(0)=\|u_n\|_\infty, \forall n\in \mathbb{N}$. Setting
$$\tilde{u}_n(x):=\frac{1}{\|u_n\|_\infty}u_n\left(\frac{x}{\|u_n\|_{\infty}^{\frac{\beta-2}{2}}}\right),$$
we have $\tilde{u}_n(0)=\|\tilde{u}_n\|_\infty=1$ and
\beq\lab{eq:20210824-xbe1}
-\Delta \tilde{u}_n=\frac{g(\|u_n\|_{\infty}\tilde{u}_n)}{\|u_n\|_{\infty}^{\beta-1}}-\frac{\lambda_n}{\|u_n\|_{\infty}^{\beta-2}} \tilde{u}_n.
\eeq
By Lemma \ref{20210822-wl1}, the right hand side of \eqref{eq:20210824-xbe1} is in $L^\infty(\RN)$. Passing to the limit, we may assume that $\tilde{u}_n\rightarrow \tilde{u}$ in $C_{loc}^{2}(\RN)$ where $\tilde{u}$ is a non negative bounded solution to
\beq\lab{eq:20210824-xbe2}
-\Delta \tilde{u}=\mu_2 \tilde{u}^{\beta-1}~\hbox{in}~\RN.
\eeq
Again here, Theorem \ref{thm:20210903-th2} contradicts $\tilde{u}(0)=1$.
\end{proof}

\bt\lab{thm:20210820-th3}{\bf (The behavior in the sense of $C_{r,0}(\RN)$ as $\lambda\rightarrow +\infty$)}
Let $N \geq 1$ and assume that (G1)-(G2) hold. Let $\{u_n\}_{n=1}^{\infty}\subset H^{1}(\RN)$ be positive radial solutions to \eqref{eq:NLS} with $\lambda=\lambda_n\rightarrow +\infty$.
 Defining
\beq\lab{eq:20210820-e10}
v_n(x):=\lambda_{n}^{\frac{1}{2-\beta}}u_n\left(\frac{x}{\sqrt{\lambda_n}}\right),
\eeq
and passing to a subsequence if necessary we have that $v_n\rightarrow V$ in $C_{r,0}(\RN)$ as $n\rightarrow +\infty$, where $V$ is given by Theorem \ref{cro:20210824-xc1}.
\et

\begin{proof}
By Lemmas \ref{lemma:20210820-l5} and \ref{lemma:20210820-l6}, we see that
$$
0<\liminf_{n\rightarrow \infty}\frac{u_n(0)^{\beta-2}}{\lambda_n}\leq \limsup_{n\rightarrow \infty}\frac{u_n(0)^{\beta-2}}{\lambda_n}<+\infty,
$$
which implies $\{v_n\}$ is uniformly bounded in $L^\infty(\RN)$.
We note that $v_n$ solves
\beq\lab{eq:20210824-xbe4}
-\Delta v_n=-v_n+\frac{g(\lambda_{n}^{\frac{1}{\beta-2}}v_n)}{\lambda_{n}^{\frac{\beta-1}{\beta-2}}}~\hbox{in}~\RN.
\eeq
If $\alpha\leq \beta$, by Lemma \ref{20210822-wl1} and $\lambda_n\rightarrow +\infty$, we have that
$$\frac{g(\lambda_{n}^{\frac{1}{\beta-2}}v_n)}{\lambda_{n}^{\frac{\beta-1}{\beta-2}}}
\leq C \left(\lambda_{n}^{\frac{\alpha-\beta}{\beta-2}} v_{n}^{\alpha-1}+v_{n}^{\beta-1}\right),$$
which implies the right hand side of \eqref{eq:20210824-xbe4} is in $L^\infty(\RN)$. If $\alpha>\beta$, still by Lemma \ref{20210822-wl1}, we have that
$$\frac{g(\lambda_{n}^{\frac{1}{\beta-2}}v_n)}{\lambda_{n}^{\frac{\beta-1}{\beta-2}}}
\leq Cv_{n}^{\beta-1},$$
the right hand side of \eqref{eq:20210824-xbe4} is also of $L^\infty(\RN)$.
Thus, in view of standard elliptic estimates, we may assume that $v_n\rightarrow V$ in $C_{loc}^{2}(\RN)$. Hence, $\{v_n\}$ is equi-continuous on bounded sets. Also, proceeding exactly as in the proof of Lemma \ref{lemma:20210820-l4} we may check that  $\{v_n\}$ decay to $0$ uniformly at $\infty$.
Hence, $\{v_n\}$ is pre-compact in $C_{r,0}(\RN)$ and $v_n\rightarrow V$ in $C_{r,0}(\RN)$ where $0<V\in C_{r,0}(\RN)$ solves \eqref{eq:20210820-e6l}.
\end{proof}

Now, adapting in a straightforward way the proof of Theorem \ref{thm:20210820-th2} we also have
\bt\lab{thm:20210820-th4}{\bf (The behavior in the sense of $H^1(\RN)$ as $\lambda\rightarrow +\infty$)}
Let $N \geq 1$ and assume that (G1)-(G2) hold. Let $\{u_n\}, \{\lambda_n\},\{v_n\}$ and $V$ be given by Theorem \ref{thm:20210820-th3}. Then we also have
$$v_n\rightarrow V~\hbox{in}~H^1(\RN).$$
\et

Now we can give the
\begin{proof}[Proof of Theorem \ref{cro:20210824-xc1}]
(i) In view of Lemma \ref{lemma:20210823-l1} and Remark \ref{remark:20210823-r1} we can assume without restriction that $u_n$ is a radial function. Let $v_n$ be defined by \eqref{eq:20210820-e5}. Thus, noting that
 $$
 \begin{cases}
 \|\nabla u_n\|_2^2=\lambda_n^{1+\frac{2}{\alpha-2}-\frac{N}{2}} \|\nabla v_n\|_2^2,\\
 \|u_n\|_2^2=\lambda_n^{\frac{2}{\alpha-2}-\frac{N}{2}}\|v_n\|_2^2
 \end{cases}$$
and recalling, see Theorem \ref{thm:20210820-th2}, that $v_n \rightarrow U$ in $H^1(\RN)$ as $\lambda_n \rightarrow 0^+ $,  the conclusion follows.

 (ii) Similarly, for $v_n$ defined by \eqref{eq:20210820-e10}, we have
 $$
 \begin{cases}
 \|\nabla u_n\|_2^2=\lambda_n^{1+\frac{2}{\beta-2}-\frac{N}{2}} \|\nabla v_n\|_2^2,\\
 \|u_n\|_2^2=\lambda^{\frac{2}{\beta-2}-\frac{N}{2}}\|v_n\|_2^2.
 \end{cases}$$
Since $v_n \to V$ in $H^1(\RN)$ as $\lambda_n \to + \infty,$ see  Theorem \ref{thm:20210820-th4}, the conclusion follows.
\end{proof}

\section{Local uniqueness of positive solutions}\label{sec:loc-uniqueness}
\numberwithin{equation}{section}

In this section, we prove the uniqueness of positive solutions to \eqref{eq:NLS} provided $\lambda >0$ small or large enough.
The uniqueness will help us to prove that the set consisting of $(\lambda, u_\lambda)$ with $\lambda>0$ small, is indeed a curve in Section \ref{sec:Berestycki-Lions}.
More precisely,

\bt\lab{thm:loc-uniqueness}
Let $N \geq 1$ and assume that (G1)-(G2) hold. Then \eqref{eq:NLS} has at most one positive solution for $\lambda >0$ large enough. If in addition (G3) holds, it is also the case for $\lambda >0$ small enough.
\et

\begin{proof}
(i) We first consider the case where $\lambda >0$ is small.
We argue by contradiction and suppose there exist two families of positive solutions $u_{\lambda}^{(1)}$ and $u_{\lambda}^{(2)}$ to \eqref{eq:NLS} with $\lambda\rightarrow 0^+$. Let
$$
  v_{\lambda}^{(i)}(x)
	 :=\lambda^{-\frac{1}{\alpha-2}}u_{\lambda}^{(i)}
	       \left(x/\sqrt{\lambda}\right),\quad i=1,2.
$$
Then
$v_{\lambda}^{(1)}(x),
 v_{\lambda}^{(2)}(x)\in H_{rad}^{1}(\RN)$
are two families of positive radial solutions to
$$-\Delta v+v=\frac{g(\lambda^{\frac{1}{\alpha-2}}v)}{\lambda^{\frac{\alpha-1}{\alpha-2}}}.$$
By Theorems \ref{thm:20210820-th1} and  \ref{thm:20210820-th2},
$$v_{\lambda}^{(i)}(x)\rightarrow U\;\hbox{as $\lambda\rightarrow 0^+$ both in}\;C_{r,0}(\RN)~\hbox{and in}~H^1(\RN), \;i=1,2.$$
We study the normalization
$$
\xi_\lambda:=\frac{v_{\lambda}^{(1)}-v_{\lambda}^{(2)}}{\left\|v_{\lambda}^{(1)}-v_{\lambda}^{(2)}\right\|_{\infty}}.
$$
By Lemma \ref{20210822-wl1} and the mean value theorem, for any $x \in \RN$, there exists some $\theta(x)\in [0,1]$ such that
\begin{align*}
&\frac{g(\lambda^{\frac{1}{\alpha-2}}v_{\lambda}^{(1)}(x))}{\lambda^{\frac{\alpha-1}{\alpha-2}}}-\frac{g(\lambda^{\frac{1}{\alpha-2}}v_{\lambda}^{(2)}(x))}{\lambda^{\frac{\alpha-1}{\alpha-2}}}\\
=&\lambda^{-\frac{\alpha-1}{\alpha-2}}g'\left(\lambda^{\frac{1}{\alpha-2}}[\theta(x)v_{\lambda}^{(1)}(x)+(1-\theta(x))v_{\lambda}^{(2)}(x)]\right) \lambda^{\frac{1}{\alpha-2}} \left(v_{\lambda}^{(1)}(x)-v_{\lambda}^{(2)}(x)\right)\\
=&\lambda^{-1}g'\left(\lambda^{\frac{1}{\alpha-2}}[\theta(x)v_{\lambda}^{(1)}(x)+(1-\theta(x))v_{\lambda}^{(2)}(x)]\right) \left(v_{\lambda}^{(1)}(x)-v_{\lambda}^{(2)}(x)\right)\\
=&\lambda^{-1}[1+o(1)]\mu_1(\alpha-1) \left(\lambda^{\frac{1}{\alpha-2}}[\theta(x)v_{\lambda}^{(1)}(x)+(1-\theta(x))v_{\lambda}^{(2)}(x)]\right)^{\alpha-2}  \left(v_{\lambda}^{(1)}(x)-v_{\lambda}^{(2)}(x)\right)\\
=&[1+o(1)]\mu_1(\alpha-1) \left(\theta(x)v_{\lambda}^{(1)}(x)+(1-\theta(x))v_{\lambda}^{(2)}(x)\right)^{\alpha-2}  \left(v_{\lambda}^{(1)}(x)-v_{\lambda}^{(2)}(x)\right).
\end{align*}
So we have that
\beq\lab{eq:20210820-xbue1}
-\Delta \xi_\lambda =-\xi_\lambda+[1+o(1)]\mu_1(\alpha-1) \left(\theta(x)v_{\lambda}^{(1)}(x)+(1-\theta(x))v_{\lambda}^{(2)}(x)\right)^{\alpha-2} \xi_\lambda.
\eeq
Recalling Lemma \ref{lemma:20210820-l4}, and the facts that $\|\xi_\lambda\|_\infty=1, \theta(x)\in [0,1]$ and $v_{\lambda}^{(i)}\rightarrow U$ in $C_{r,0}(\RN)$, one can see that the right hand side of \eqref{eq:20210820-xbue1} is in $L^\infty(\RN)$. Hence, passing to a  subsequence if necessary, we can assume that $\xi_\lambda\rightarrow \xi$ in $C_{loc}^{2}(\RN)$, where $\xi$ is a radial bounded function satisfying
$$
-\Delta \xi+\xi=(\alpha-1)\mu_1 U^{\alpha-2}\xi.
$$
Since $\|\xi\|_\infty=1$, standard elliptic estimates imply that $\xi$ is a strong solution. Then by the decay of $U$ and applying a comparison principle, we obtain that $\xi$ is exponentially decaying to $0$ as $\mid x\mid\to \infty$. Hence, $\xi\in C_{r,0}(\RN)\cap H_{rad}^1(\RN)$. At this point, Proposition \ref{lemma:20210820-kernel} provides a contradiction.

(ii) Now, we consider the case of $\lambda >0$ large.
We also argue by contradiction and suppose there exist two families of positive solutions $u_{\lambda}^{(1)}$ and $u_{\lambda}^{(2)}$ to \eqref{eq:NLS} with $\lambda\rightarrow +\infty$. Let
$$
  v_{\lambda}^{(i)}(x)
	 :=\lambda^{-\frac{1}{\beta-2}}u_{\lambda}^{(i)}
	       \left(x/\sqrt{\lambda}\right),\quad i=1,2.
$$
Then
$v_{\lambda}^{(1)}(x),
 v_{\lambda}^{(2)}(x)\in H_{rad}^{1}(\RN)$
are two families of positive solutions to the problem
$$-\Delta v+v=\frac{g(\lambda^{\frac{1}{\beta-2}}v)}{\lambda^{\frac{\beta-1}{\beta-2}}}.$$
By Theorems \ref{thm:20210820-th3} and  \ref{thm:20210820-th4},
$$v_{\lambda}^{(i)}(x)\rightarrow V\;\hbox{as $\lambda\rightarrow +\infty$ both in}\;C_{r,0}(\RN)~\hbox{and also in}~H^1(\RN), \;i=1,2.$$
We also consider  the normalization
$$
\xi_\lambda:=\frac{v_{\lambda}^{(1)}-v_{\lambda}^{(2)}}{\left\|v_{\lambda}^{(1)}-v_{\lambda}^{(2)}\right\|_{\infty}}.
$$
A direct computation shows that there exists some $\theta(x)\in [0,1]$ such that
\beq\lab{eq:20210820-xbue2}
-\Delta \xi_\lambda =-\xi_\lambda +\lambda^{-1}g'\left(\lambda^{\frac{1}{\beta-2}}[\theta(x)v_{\lambda}^{(1)}(x)+(1-\theta(x))v_{\lambda}^{(2)}(x)]\right) \xi_\lambda~\hbox{in}~\RN.
\eeq
By Lemma \ref{20210822-wl1}-(iv), if $\alpha>\beta$, we have
\begin{align*}
&\lambda^{-1}g'\left(\lambda^{\frac{1}{\beta-2}}[\theta(x)v_{\lambda}^{(1)}(x)+(1-\theta(x))v_{\lambda}^{(2)}(x)]\right) \\
\leq& C [\theta(x)v_{\lambda}^{(1)}(x)+(1-\theta(x))v_{\lambda}^{(2)}(x)]^{\beta-2},
\end{align*}
which implies the right hand side of \eqref{eq:20210820-xbue2} is in $L^\infty(\RN)$ by $\|\xi_\lambda\|_\infty=1, \theta(x)\in [0,1]$ and $v_{\lambda}^{(i)}\rightarrow V$ in $C_{r,0}(\RN)$.
On the other hand, if $\alpha\leq \beta$,
\begin{align*}
&\lambda^{-1}g'\left(\lambda^{\frac{1}{\beta-2}}[\theta(x)v_{\lambda}^{(1)}(x)+(1-\theta(x))v_{\lambda}^{(2)}(x)]\right) \\
\leq& C\left(\lambda^{\frac{\alpha-\beta}{\beta-2}}[\theta(x)v_{\lambda}^{(1)}(x)+(1-\theta(x))v_{\lambda}^{(2)}(x)]^{\alpha-2}
+[\theta(x)v_{\lambda}^{(1)}(x)+(1-\theta(x))v_{\lambda}^{(2)}(x)]^{\beta-2}\right).
\end{align*}
So by $\lambda\rightarrow +\infty$,
\begin{align*}
&\lambda^{-1}g'\left(\lambda^{\frac{1}{\beta-2}}[\theta(x)v_{\lambda}^{(1)}(x)+(1-\theta(x))v_{\lambda}^{(2)}(x)]\right)\\
\leq &C \left([\theta(x)v_{\lambda}^{(1)}(x)+(1-\theta(x))v_{\lambda}^{(2)}(x)]^{\alpha-2}
+[\theta(x)v_{\lambda}^{(1)}(x)+(1-\theta(x))v_{\lambda}^{(2)}(x)]^{\beta-2}\right).
\end{align*}
Using the facts that  $\|\xi_\lambda\|_\infty=1, \theta(x)\in [0,1]$ and $v_{\lambda}^{(i)}\rightarrow V$ in $C_{r,0}(\RN)$, we see the right hand side of \eqref{eq:20210820-xbue2} is in $L^\infty(\RN)$.
Hence, passing to a subsequence we can assume that $\xi_\lambda \rightarrow \xi$ in $C_{loc}^{2}(\RN)$, where $\xi$ is a radial bounded function satisfying
$$
-\Delta \xi+\xi=(\beta-1)\mu_2 V^{\beta-2}\xi.
$$
As in the case of $\lambda>0$ small, Proposition \ref{lemma:20210820-kernel} provides a contradiction.
\end{proof}

\section{Existence of a curve of positive solutions for $\lambda >0$ small.}\label{sec:Berestycki-Lions}
\numberwithin{equation}{section}
In this section we prove the existence of a curve of positive solutions to \eqref{eq:NLS} when $\lambda >0$ is small. In the particular case of $N=1$ we prove that this curve is global. We shall use the classical results recalled below.

\begin{proposition}\lab{pro:20210901-xc1}
Let $N \geq 1$ and assume that (G1)-(G2) hold. For any $\lambda >0$ there exists a positive radial solution $u_{\lambda} \in H^1(\RN)$ to \eqref{eq:NLS}  which, as any solution to \eqref{eq:NLS}, satisfies the Pohozaev identity
\beq\lab{eq:20210901-e11}
\frac{N-2}{2}\|\nabla u\|_2^2+\frac{N}{2}\lambda\|u\|_2^2=N\int_{\RN}G(u) \, dx.
\eeq
Moreover, defining the functional $J_\lambda: H^{1}(\RN)\rightarrow \mathbb{R}$ by
$$
J_\lambda(u):=\frac{1}{2}\left(\|\nabla u\|_2^2+\lambda \|u\|_2^2\right)-\int_{\RN}G(u) \, dx
$$ we have
\beq\lab{eq:20210901-e5}
J_\lambda(u_{\lambda}) = m_\lambda:=\inf_{\gamma\in \Gamma}\max_{t\in [0,1]}J_\lambda(\gamma(t)),
\eeq
where
\beq\lab{eq:20210901-e6}
\Gamma_{\lambda}:= \Big\{ \gamma \in C([0, 1], H^1(\RN)): \gamma(0)=0, J_{\lambda}(\gamma(1))<0 \Big\}.
\eeq
In particular, \beq\lab{eq:20210901-e12}
m_{\lambda}=\frac{1}{N}\|\nabla u_{\lambda}\|_2^2.
\eeq
\end{proposition}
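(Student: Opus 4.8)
The starting point is to view \eqref{eq:NLS} as the autonomous equation $-\Delta u=\tilde g_\lambda(u)$, $\tilde g_\lambda(s):=g(s)-\lambda s$, and to verify the Berestycki--Lions hypotheses for $\tilde g_\lambda$. By Lemma \ref{20210822-wl1} and $\alpha>2$ one has $\tilde g_\lambda(s)/s\to-\lambda<0$ as $s\to0^+$; by $\beta<2^*$ one has $\tilde g_\lambda(s)/s^{2^*-1}\to0$ as $s\to+\infty$; and, writing $\tilde G_\lambda(s):=G(s)-\tfrac\lambda2 s^2$, the inequality $\beta>2$ forces $\tilde G_\lambda(s)/s^2\to+\infty$, so $\tilde G_\lambda(\xi_0)>0$ for some $\xi_0>0$. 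Hence \cite{BerestyckiLions1983} (for $N=1$ and $N\ge3$) and \cite{BGK1983} (for $N=2$) provide a positive, radially decreasing, least-energy solution $u_\lambda\in H^1_{rad}(\RN)$, which is classical and exponentially decaying by the regularity recalled in Remark \ref{remark:20210823-r1}. The Pohozaev identity \eqref{eq:20210901-e11} for an arbitrary $H^1$ solution $u$ of \eqref{eq:NLS} then follows from the usual dilation computation (multiply by $x\cdot\nabla u$, integrate against a cut-off, let the cut-off tend to $1$, the exponential decay controlling the remainder terms).

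Rearranging \eqref{eq:20210901-e11} gives $\int_{\RN}G(u)=\tfrac{N-2}{2N}\|\nabla u\|_2^2+\tfrac\lambda2\|u\|_2^2$, and substituting this into $J_\lambda$ yields $J_\lambda(u)=\tfrac1N\|\nabla u\|_2^2$ for \emph{every} solution $u$; thus \eqref{eq:20210901-e12} will follow once the identity $J_\lambda(u_\lambda)=m_\lambda$ in \eqref{eq:20210901-e5} is proved. The functional $J_\lambda$ has the mountain-pass geometry: $J_\lambda(0)=0$, while Lemma \ref{20210822-wl1}-(iii) and the Sobolev embedding give $\int_{\RN}G(u)\le C(\|u\|_{H^1}^\alpha+\|u\|_{H^1}^\beta)$ with $\alpha,\beta>2$, so $J_\lambda(u)\ge\tfrac12\min\{1,\lambda\}\|u\|_{H^1}^2-C\|u\|_{H^1}^\alpha-C\|u\|_{H^1}^\beta$ stays bounded below by a positive constant on a small sphere of $H^1(\RN)$. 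To obtain $m_\lambda\le J_\lambda(u_\lambda)$ (and simultaneously $\Gamma_\lambda\ne\emptyset$) I would use, for $N\ge3$, the path $t\mapsto u_\lambda(\cdot/t)$: by \eqref{eq:20210901-e11} one computes $J_\lambda(u_\lambda(\cdot/t))=\tfrac{t^{N-2}}2\|\nabla u_\lambda\|_2^2-t^N\tfrac{N-2}{2N}\|\nabla u_\lambda\|_2^2$, which is maximal at $t=1$ with value $\tfrac1N\|\nabla u_\lambda\|_2^2=J_\lambda(u_\lambda)$, tends to $0$ as $t\to0^+$ and to $-\infty$ as $t\to+\infty$; reparametrised on $[0,1]$ this is an admissible path whose maximum equals $J_\lambda(u_\lambda)$. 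For the reverse inequality, I would use that $u_\lambda$, being least-energy, lies on the Pohozaev set $\mathcal P_\lambda:=\{u\in H^1(\RN)\setminus\{0\}:\tfrac{N-2}2\|\nabla u\|_2^2=N\int_{\RN}\tilde G_\lambda(u)\}$, on which $J_\lambda(u)=\tfrac1N\|\nabla u\|_2^2$ and, by the Berestycki--Lions minimisation, $\inf_{\mathcal P_\lambda}J_\lambda=J_\lambda(u_\lambda)$; then, for $N\ge2$, every $\gamma\in\Gamma_\lambda$ meets $\mathcal P_\lambda$, since the quantity $u\mapsto\tfrac{N-2}2\|\nabla u\|_2^2-N\int_{\RN}\tilde G_\lambda(u)$ is positive for $u\ne0$ near $0$ and negative at $\gamma(1)$ (because $J_\lambda(\gamma(1))<0$ forces $\int_{\RN}\tilde G_\lambda(\gamma(1))>\tfrac12\|\nabla\gamma(1)\|_2^2\ge0$), hence vanishes somewhere along $\gamma$; therefore $\max_{t}J_\lambda(\gamma(t))\ge\inf_{\mathcal P_\lambda}J_\lambda=J_\lambda(u_\lambda)$. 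Combining the two inequalities gives $J_\lambda(u_\lambda)=m_\lambda$, and then \eqref{eq:20210901-e12} is immediate.

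The main obstacle is the mountain-pass identity in the low-dimensional cases. For $N=2$ the dilation path $u_\lambda(\cdot/t)$ has constant energy $\tfrac12\|\nabla u_\lambda\|_2^2$ and never turns negative, so it must be supplemented by an amplitude deformation, along the lines of the two-parameter family of \cite{BGK1983}, to reach $\{J_\lambda<0\}$ while keeping the maximal value at $J_\lambda(u_\lambda)$. For $N=1$ the crossing argument for $\mathcal P_\lambda$ also degenerates---near $0$ the relevant quantity $-\tfrac12\|u'\|_2^2-\int_{\mathbb{R}}\tilde G_\lambda(u)$ has no fixed sign---so both inequalities need a separate treatment, which for $N=1$ can be carried out directly from the phase-plane description of the ODE. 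The verification of the Berestycki--Lions conditions, the Pohozaev identity and the near-$0$ geometry of $J_\lambda$ are all routine.
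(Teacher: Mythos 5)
Your overall architecture matches the paper's: existence of a least--action solution from \cite{BerestyckiLions1983,BGK1983}, the Pohozaev identity, the mountain--pass characterization, and then the algebra that turns \eqref{eq:20210901-e11} and \eqref{eq:20210901-e5} into \eqref{eq:20210901-e12}. The real difference is in how \eqref{eq:20210901-e5} is obtained. The paper simply quotes \cite{JeanTanaka2002} ($N\geq 2$) and \cite{JeanTanaka2003} ($N=1$), and spends its only substantive effort on the fact that those results are stated for \emph{odd} nonlinearities while the present $g$ (extended by $0$ on $(-\infty,0]$) is not odd: it passes to the odd extension $\widetilde g$, uses $\widetilde G\geq G$, and invokes the existence of an optimal path that is pointwise positive. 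You instead try to reprove the Jeanjean--Tanaka characterization from scratch via the dilation path (for $m_\lambda\leq J_\lambda(u_\lambda)$) and the Pohozaev--manifold crossing (for the reverse inequality). For $N\geq 3$ this is essentially sound, and it is a legitimately more self-contained route; note only that the step $\inf_{\mathcal P_\lambda}J_\lambda=J_\lambda(u_\lambda)$ is asserted rather than proved, and that it is precisely there that the oddness issue re-enters, since the Berestycki--Lions constrained minimization characterizes the least--action solution of the \emph{symmetrized} problem. You never address non-oddness anywhere, which is the one point the paper actually has to argue.

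The genuine gap is that your proof of \eqref{eq:20210901-e5} is incomplete exactly in the cases $N=1$ and $N=2$, while the proposition is stated for all $N\geq 1$. You correctly diagnose the obstructions (for $N=2$ the dilation path has constant energy $\tfrac12\|\nabla u_\lambda\|_2^2$ and never enters $\{J_\lambda<0\}$; for $N=1$ both the upper bound and the near-$0$ positivity of the Pohozaev functional fail), but you leave them as one-sentence sketches. Filling them is not routine: the two-parameter amplitude/dilation deformation for $N=2$ and the ODE phase-plane argument for $N=1$ are the entire content of \cite{JeanTanaka2002} and \cite{JeanTanaka2003} respectively, and carrying them out would amount to rewriting those papers. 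As written, your argument proves the proposition only for $N\geq 3$. The efficient repair is the paper's: cite \cite{JeanTanaka2002,JeanTanaka2003} for the mountain--pass characterization in all dimensions, and then supply the short comparison argument ($\widetilde G\geq G$ together with a positive optimal path) showing that the characterization survives the non-odd extension of $g$.
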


\bp In \cite{BerestyckiLions1983} for $N\geq 3$ and in \cite{BGK1983} for $N=2$ the existence of a least action solution was established for \eqref{eq:NLS} under very general assumptions which are satisfied under (G1)-(G2) and assuming that $g$ is odd.   Later in \cite{JeanTanaka2002}, for $N\geq 2$ and in \cite{JeanTanaka2003} for $N=1$, the mountain pass characterization of this least action solution was established. Let us check that these results also hold for our nonlinearity $g(s)$ which is not odd. Clearly, the solution $u_{\lambda}$ obtained by replacing $g(s)$ by $\widetilde{g}(s)$ with
$$ \widetilde{g}(s) = \begin{cases}
g(s)\quad&\hbox{if} \, s \geq 0,\\
-g(-s) \quad&\hbox{if} \, s \leq 0
\end{cases}$$
being non-negative is also a least action solution of the functional $J_{\lambda}(u)$. Now, observe that since $g(s) \geq 0$ it holds that
$$\widetilde{G}(s) := \int_0^s \widetilde{g}(t) dt \geq G (s), \quad \forall s \in \R.$$
Thus, recalling, see \cite{JeanTanaka2002,JeanTanaka2003} that, for any positive least action solution, there exists a path $\gamma \in \Gamma_{\lambda}$  satisfying
$$\max_{t \in [0,1]} J_{\lambda}(\gamma(t)) = m_{\lambda}$$
such that $\gamma(t) (x) >0$ for all $x \in \RN$, $t \in (0,1]$, we deduce that the mountain pass characterization given in \eqref{eq:20210901-e5}-\eqref{eq:20210901-e6} is preserved even if $g$ is not odd.
Finally, we note that \eqref{eq:20210901-e12} follows directly combining \eqref{eq:20210901-e11} and \eqref{eq:20210901-e5}.
\ep

Recalling that the definition of a critical point of mp-type is given in Definition \ref{def-mptype}, we have
\begin{lemma}\label{lemma:MPL}
Let $N \geq 2$ and assume that (G1)-(G2) hold. Any solution $w \in H^1(\RN)$ to \eqref{eq:NLS} which satisfies $J_{\lambda}(w) = m_{\lambda}$ is of mp-type.
\end{lemma}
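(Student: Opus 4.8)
The plan is to exploit the mountain pass characterization $m_\lambda=\inf_{\gamma\in\Gamma_\lambda}\max_{[0,1]}J_\lambda\circ\gamma$ from \eqref{eq:20210901-e5}--\eqref{eq:20210901-e6} together with the existence of an optimal path running through $w$. First I would note that $w$ is a least action solution: multiplying \eqref{eq:NLS} by $w$ gives $\|\nabla w\|_2^2+\lambda\|w\|_2^2=\int_{\RN}g(w)w\,dx$, and combined with the Pohozaev identity \eqref{eq:20210901-e11} this yields $J_\lambda(v)=\tfrac1N\|\nabla v\|_2^2$ for \emph{every} solution $v$ of \eqref{eq:NLS}, as in \eqref{eq:20210901-e12}; since by \cite{JeanTanaka2002} the mountain pass level coincides with the least action level, the hypothesis $J_\lambda(w)=m_\lambda$ forces $w$ to be a least action solution. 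Consequently, by \cite{JeanTanaka2002} for $N\geq2$ (and explicitly for $N\geq3$ by taking the reparametrised dilation $\gamma(s)(x)=w(x/(sT))$ for suitable $T>1$, along which $J_\lambda(w(\cdot/t))=\tfrac12\|\nabla w\|_2^2\big(t^{N-2}-\tfrac{N-2}{N}t^N\big)$ is strictly maximised at $t=1$ with value $m_\lambda$, tends to $-\infty$ as $t\to+\infty$, and whose domain extends continuously to $0$ since $\|w(\cdot/t)\|_{H^1}\to0$ as $t\to0^+$), there exist $\gamma\in\Gamma_\lambda$ and $\bar t\in(0,1)$ with $\gamma(\bar t)=w$, $\max_{[0,1]}J_\lambda\circ\gamma=m_\lambda$, and $J_\lambda(\gamma(t))<m_\lambda$ for every $t\in[0,1]\setminus\{\bar t\}$.

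With this path in hand, the nonemptiness requirement in Definition \ref{def-mptype} is immediate: for any open neighbourhood $W$ of $w$, continuity of $\gamma$ provides points $\gamma(t)\in W$ with $t$ close to $\bar t$ but $t\neq\bar t$, and these satisfy $J_\lambda(\gamma(t))<m_\lambda$, so $W\cap\{u\in H^1(\RN):J_\lambda(u)<m_\lambda\}\neq\emptyset$.

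For the failure of path-connectedness I would argue by contradiction. Suppose that for some open neighbourhood $W$ of $w$ the set $A:=W\cap\{u\in H^1(\RN):J_\lambda(u)<m_\lambda\}$ is path-connected (it is nonempty by the previous step). Using continuity of $\gamma$, choose $t_1<\bar t<t_2$ in $(0,1)$ with $\gamma([t_1,t_2])\subset W$; then $\gamma(t_1),\gamma(t_2)\in A$. Let $\sigma:[t_1,t_2]\to A$ be a continuous path joining $\gamma(t_1)$ to $\gamma(t_2)$ inside $A$, and define $\tilde\gamma\in C([0,1],H^1(\RN))$ by $\tilde\gamma=\gamma$ on $[0,t_1]\cup[t_2,1]$ and $\tilde\gamma=\sigma$ on $[t_1,t_2]$. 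Then $\tilde\gamma(0)=0$ and $J_\lambda(\tilde\gamma(1))=J_\lambda(\gamma(1))<0$, so $\tilde\gamma\in\Gamma_\lambda$. On the compact set $[0,t_1]\cup[t_2,1]$, which avoids $\bar t$, one has $J_\lambda\circ\gamma<m_\lambda$, hence its maximum there is $<m_\lambda$; and on $[t_1,t_2]$, $\tilde\gamma$ takes values in the compact set $\sigma([t_1,t_2])\subset A\subset\{J_\lambda<m_\lambda\}$, so its maximum there is $<m_\lambda$ too. Therefore $\max_{[0,1]}J_\lambda\circ\tilde\gamma<m_\lambda$, contradicting $m_\lambda=\inf_{\gamma\in\Gamma_\lambda}\max_{[0,1]}J_\lambda\circ\gamma$. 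Hence $A$ is not path-connected, and $w$ is of mp-type.

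The crux of the argument is the first step: producing a mountain pass path through $w$ whose maximum is attained only at $w$. For $N\geq3$ this is elementary via the Pohozaev dilation. For $N=2$ the dilation leaves $J_\lambda$ constant along it, so one must lean on the more delicate construction of \cite{JeanTanaka2002}, which first requires verifying, through the equality of the mountain pass and least action levels, that $w$ is a least action solution; here I would take care to cite the precise statements of \cite{JeanTanaka2002,JeanTanaka2003}. Once the path is available, the remainder is a purely topological path-surgery and presents no genuine obstacle.
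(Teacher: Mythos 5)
Your overall strategy coincides with the paper's: produce a path in $\Gamma_\lambda$ passing through $w$ at the level $m_\lambda$ and lying strictly below $m_\lambda$ away from $w$, then run the path-surgery contradiction against the definition of $m_\lambda$. The surgery step is carried out correctly and is essentially identical to the paper's. For $N\geq 3$ your explicit dilation path is also correct and self-contained: the formula $J_\lambda(w(\cdot/t))=\tfrac12\|\nabla w\|_2^2\bigl(t^{N-2}-\tfrac{N-2}{N}t^N\bigr)$ follows from the Pohozaev identity, has a unique maximum at $t=1$ with value $\tfrac1N\|\nabla w\|_2^2=m_\lambda$, and tends to $-\infty$; this is arguably more elementary than the paper's citation.

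The gap is the case $N=2$, which the lemma covers and which is exactly where the construction is delicate. There the Pohozaev identity gives $\lambda\|w\|_2^2=2\int G(w)$, so $J_\lambda(w(\cdot/t))\equiv\tfrac12\|\nabla w\|_2^2=m_\lambda$ for \emph{all} $t$: the dilation path is flat at the mountain pass level, and the standard construction of \cite{JeanTanaka2002} only yields a path with $\max_t J_\lambda(\gamma(t))=m_\lambda$ passing through $w$ --- it does \emph{not} assert that the maximum is attained only at $w$, which is the property your argument leans on (both for nonemptiness of $W_w^-$ and for bounding $J_\lambda\circ\gamma$ below $m_\lambda$ on $[0,t_1]\cup[t_2,1]$). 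The paper closes precisely this hole by invoking \cite[Lemma 4.1]{Jeanjean-Lu-2021}, which, for any prescribed radius $r$, produces a path through $w$ with $\max J_\lambda=J_\lambda(w)$ and $J_\lambda(\gamma(t))<J_\lambda(w)$ whenever $\|\gamma(t)-w\|_{H^1}\geq 2r$. This weaker property suffices: one takes $B(w,4r)\subset W$ and chooses the cut points $\tau_1<\tau<\tau_2$ at distance exactly $3r$ from $w$, which guarantees both that $\gamma(\tau_i)\in W_w^-$ (nonemptiness) and that the surgery produces a path with maximum strictly below $m_\lambda$. So your proof needs either this refined path lemma, or a genuine argument that for $N=2$ one can build a path whose maximum is attained only at $w$; as written, the $N=2$ case is asserted rather than proved.
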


\begin{proof}
Let $w \in H^1(\RN)$ be a critical point of $J_{\lambda}$ with $J_{\lambda}(w) = m_{\lambda}$. We need to show that:
 for any open neighborhood $W \subset H^1(\mathbb{R}^N)$ of $w$, the set
$$ W_w^- := W \cap \{ u \in  H^1(\mathbb{R}^N) : J_{\mu}(u) < m_{\mu}\}$$
is nonempty and not path-connected. \\
Since $W$ is open, it contains a ball $B(w, 4r)$ where
$$B(w, 4r) = \{u \in H^1(\mathbb{R}^N) : \|u-w \|_{H^1(\mathbb{R}^N)} < 4r \}.$$
Using \cite[Lemma 4.1]{Jeanjean-Lu-2021} with $\delta = 2 r$ and $M >0$ arbitrary fixed, we deduce that there exists a constant $T >0$ and a
continuous path $\gamma : [0, T] \to H^1(\mathbb{R}^N)$ satisfying
    \begin{itemize}
      \item[$(i)$] $\gamma(0) = 0$, $J_{\lambda} (\gamma(T)) < -1$, $\max_{t \in [0, T]} J_{\lambda}(\gamma(t)) = J_{\lambda}(w)$; \smallskip
			
      \item[$(ii)$] $\gamma(\tau) = w$ for some $\tau \in (0, T)$, and
        \begin{equation*}
          J_{\lambda}(\gamma(t)) < J_{\lambda}(w)
        \end{equation*}
      for any $t \in [0, T]$ such that $\|\gamma(t) - w\|_{H^1(\mathbb{R}^N)} \geq 2r $.
    \end{itemize}
Note that, after a reparametrization, we can assume $T >0$ to be $1$.
		We fix $\tau_1 < \tau < \tau_2$ such that
		$$ \|\gamma(\tau_1) - w\|_{H^1(\mathbb{R}^N)} = \|\gamma(\tau_2) - w\|_{H^1(\mathbb{R}^N)} = 3r.$$
		The points $\gamma(\tau_1)$ and $\gamma(\tau_2)$ belong to $W_w^-$ and cannot be connected inside the set $W_w^-$.
		Indeed, suppose that they can be connected by $s: (\tau_1, \tau_2) \mapsto W_w^-$ with $s(\tau_1) = \gamma(\tau_1)$ and $s(\tau_2) = \gamma(\tau_2)$. Then, considering the path
		\begin{equation*}
    \tilde{\gamma}(t)  = \left\{
      \begin{aligned}
        &  \gamma(t), & &  \qquad \text{for} ~ t \in [0, \tau_1],\\
        &   s(t), & & \qquad \text{for} ~ t \in [\tau_1, \tau_2), \\
        &  \gamma(t), & & \qquad \text{for} ~ t \in [\tau_2, T]
      \end{aligned}
    \right.
  \end{equation*}
	we would have that $\tilde{\gamma} \in \Gamma_{\lambda}$  with
	$$\max_{t \in [0,T]}J_{\lambda}(\tilde{\gamma}(t)) < m_{\lambda}$$
	in contradiction with the definition of $m_{\lambda}.$
\end{proof}

\brr\lab{remark:20211130-xr1}
The property that any least action solution of the general equations considered in \cite{BerestyckiLions1983,BGK1983} is of mp-type can be proved exactly as in Lemma \ref{lemma:MPL}.
\er

\bl\lab{cro:20210901-xc1}
Let $N \geq 2$ and assume that (G1)-(G3) hold. There exists some $\lambda_0>0$ small, such that for any $\lambda\in (0,\lambda_0)$,  \eqref{eq:NLS} has a unique positive solution $u_\lambda \in H^1(\RN)$.  Furthermore,  the map $\lambda\mapsto u_\lambda, \lambda\in (0,\lambda_0)$ is continuous. That is, $\left\{(\lambda, u_\lambda):\lambda\in (0,\lambda_0)\right\}$ is a curve in $\mathbb{R}\times H_{rad}^{1}(\RN)$.
\el
\begin{proof}
Combining Proposition  \ref{pro:20210901-xc1} and Theorem \ref{thm:loc-uniqueness} we see that there exists a unique positive solution $u_{\lambda} \in H^1(\RN)$ to \eqref{eq:NLS} provided $\lambda >0$ small enough. For $N\geq 3$, just under (G1)-(G2), the result follows by
 \cite[Corollary 3.5]{Jeanjean-arxiv}, see also \cite[Lemma 19]{Shatah1985}. Here, assuming in addition that (G3) holds we give  a proof for any $N\geq 2$.
 Under (G1)-(G2) we can find some $s^*>0$ and some $\gamma>2$ such that
 \beq\lab{eq:202109020e1}
 g(s)s\geq \gamma G(s), \,  \forall s\in [0,s^*].
 \eeq
 When in addition (G3) holds, by Lemma \ref{lemma:20210820-l2}, we have
 $$\|u_\lambda\|_\infty\rightarrow 0~\hbox{as}~\lambda\rightarrow 0^+$$
 and so we can find some $\lambda_0 >0$ such that
 \beq\lab{eq:202109020e2}
 u_\lambda(x)\leq \|u_\lambda\|_\infty\leq s^*, \, \forall x\in \RN, \, \forall \lambda\in (0,\lambda_0].
 \eeq
 For any $\lambda^* \in (0, \lambda_0)$, we shall prove that $u_\lambda$ is continuous at $\lambda=\lambda^*$, namely that for any sequence $\lambda_n\rightarrow \lambda^*$, we have that $u_{\lambda_n}\rightarrow u_{\lambda^*}$ in $H^1(\RN)$.
 Without loss of generality, we may assume that
 $$\frac{\lambda^*}{2}\leq \lambda_n\leq \lambda_0, \forall n\in \mathbb{N}.$$
 Noting that $u_{\lambda_n} \in H^1(\RN)$ satisfies
 \beq\lab{eq:202109020e3}
 \|\nabla u_{\lambda_n}\|_2^2+\lambda_n\|u_{\lambda_n}\|_2^2=\int_{\RN}g(u_{\lambda_n})u_{\lambda_n}dx,
 \eeq
we obtain, combining \eqref{eq:202109020e3} with \eqref{eq:20210901-e11} and \eqref{eq:20210901-e12}-\eqref{eq:202109020e2}, that
 $$
 (\gamma N-1) \lambda_n \|u_{\lambda_n}\|_2^2\leq (1+\mid 2-N\mid\gamma)\|\nabla u_{\lambda_n}\|_2^2\leq (1+\mid 2-N\mid\gamma)N m_{\lambda_0}.
 $$
 Hence,
\begin{equation*}
\|u_{\lambda_n}\|_2^2\leq \frac{2(1+\mid 2-N\mid\gamma)N}{(\gamma N-1)\lambda^*}m_{\lambda_0} \quad \mbox{and} \quad \|\nabla u_{\lambda_n}\|_2^2=Nm_{\lambda_n}\leq Nm_{\lambda_0},
\end{equation*}
where we have used the fact that $ \lambda \mapsto m_{\lambda}$ is non-decreasing due to the mountain pass characterization \eqref{eq:20210901-e5}-\eqref{eq:20210901-e6}.
 So we conclude that $\{u_{\lambda_n}\}$ is bounded in $H^1(\RN)$. Now, by the compact embedding of $H_{rad}^{1}(\RN)$ into $ L^p(\RN), 2<p<2^*$ and using the equation of $u_n$, we can  prove that $\{u_{\lambda_n}\}$ is compact in $H^1(\RN)$. Hence, up to a subsequence, $u_{\lambda_n}\rightarrow u$ in $H^1(\RN)$, where $u\in H_{rad}^{1}(\RN)$ is a positive solution of \eqref{eq:NLS} with $\lambda=\lambda^*$. Then the uniqueness implies that $u=u_{\lambda^*}$. That is, the map $\lambda\mapsto u_\lambda$ is continuous at $\lambda=\lambda^*$.
\end{proof}

\brr\lab{remark:20210901-xr1}
Clearly the conclusion of Lemma \ref{cro:20210901-xc1} could be extended to all $\lambda >0$ if the uniqueness of positive solutions is known. However, note that
\begin{itemize}
\item[(i)] The problem of deriving conditions on $g(s)$, which insure that \eqref{eq:NLS} has a unique positive solution, has been extensively studied. The uniqueness is known to hold only for some specific class of nonlinearities. We refer to  \cite{Kwong1989, LewinRotaNodari2020, PucciSerrin1998} and the references therein in that direction.
\item[(ii)] We also note that the uniqueness of positive solution is in general not true. In \cite{JMI2013}, assuming $N=3,\lambda=1$,  $g(s)=\mid s\mid^{p-1}s+\mu \mid s\mid^{q-1}s$ with $1<q<3$, $p<5$ close to $5$ and $\mu$ large enough, the authors proved that \eqref{eq:NLS} possesses at least three different positive solutions.
\end{itemize}
\er

We now focus on the case of $N=1$. Then it is known, see \cite[Theorem 5]{BerestyckiLions1983} or \cite[Theorem 1.2]{JeanTanaka2003}, that for any $\lambda >0$ there exists a unique solution in $H^1(\mathbb{R})$ to \eqref{eq:NLS} and that this solution is, up to a translation of the origin, a positive even (radial) decreasing function.
The following result will be crucial in the proof of Theorem \ref{thm:20210822-th1} when $N=1$.

\bt\lab{thm:20210901-th1}
Let $N=1$ and assume that (G1)-(G2) hold. For any $\lambda>0$, let $u_\lambda$ denote  the unique solution of \eqref{eq:NLS} in $H^1(\mathbb{R})$. Then the map $\lambda\mapsto u_\lambda$ is  continuous from $(0,+\infty)$ to $H^1(\mathbb{R})$. In particular $\{(\lambda,u_\lambda): \lambda\in (0,+\infty)\}$ is connected.
\et
 \begin{proof}
 For any $\lambda_0>0$, we shall prove that $u_\lambda$ is continuous at $\lambda=\lambda_0$, namely that for any sequence $\lambda_n\rightarrow \lambda_0$, we have that $u_{\lambda_n}\rightarrow u_{\lambda_0}$ in $H^1(\mathbb{R})$. We claim that $\{u_{\lambda_n}\}$ is bounded in $H^1(\mathbb{R})$. The boundedness of
$\{\|\nabla u_{\lambda_n}\|_2^2 \}$ is a direct consequence of \eqref{eq:20210901-e12} and of the fact that $m \mapsto m_{\lambda}$ is non-decreasing. To prove the boundedness of $\{\| u_{\lambda_n}\|_2^2 \}$ we set, for any fixed $n$, $f(s)=-\lambda_n s+g(s)$ and recall, see \cite[Theorem 5]{BerestyckiLions1983}) that
$$\max_{x\in \mathbb{R}}u_{\lambda_n}(x)=u_{\lambda_n}(0)=s_0,$$
where $s_0 >0$ is the unique value for which $F(s)<0$ for $s\in (0,s_0)$ and $ F(s_0)=0.$
Here $F(s):= \int_0^s f(t)dt$.

Since $u_{\lambda_n}(x)\leq s_0,x\in \mathbb{R}$, we have that $F(u_{\lambda_n}(x))\leq 0,x\in \mathbb{R}$. Hence
\beq\lab{eq:20210901-we1}
\int_{\mathbb{R}}G(u_{\lambda_n})dx\leq \frac{\lambda_n}{2} \|u_{\lambda_n}\|_2^2.
\eeq
Recalling that  $u_{\lambda_n}$ must satisfy the Pohozaev identity, i.e.,
\beq\lab{eq:20210901-we2}
\lambda_n \|u_{\lambda_n}\|_2^2=\int_{\mathbb{R}}G(u_{\lambda_n})dx+\|\nabla u_{\lambda_n}\|_2^2,
\eeq
we deduce, combining  \eqref{eq:20210901-we1} and \eqref{eq:20210901-we2}, that
$$
\|u_{\lambda_n}\|_2^2\leq \frac{2}{\lambda_n} \|\nabla u_{\lambda_n}\|_2^2.
$$
Hence, $\{\|u_{\lambda_n}\|_2^2\}$ is also bounded. The sequence of solutions $\{u_n\}\subset H^1(\mathbb{R})$ is bounded. Recalling that for each $n \in \mathbb{N}$, $u_{\lambda_n}$ is a decreasing function,  and making use of
 \cite[Proposition 1.7.1]{Cazenave2003}, we deduce that $\{u_n\}$ is compact in $L^p(\mathbb{R}), \forall ~2<p\leq \infty$. Then one see that  $u_n\rightarrow u$ in $H^1(\mathbb{R})$ up to a subsequence, where $u\in H_{rad}^{1}(\mathbb{R})$ is a positive solution of \eqref{eq:NLS} with $\lambda=\lambda_0$. As in the proof of Lemma \ref{cro:20210901-xc1}, the uniqueness of positive solutions permits to conclude.
 \end{proof}

\section{Global branches of positive solutions}\label{sec:global}
\numberwithin{equation}{section}

When $N=1$, we have shown in Theorem \ref{thm:20210901-th1} just under (G1)-(G2) that, for any $\lambda \in (0, + \infty)$,  the unique positive solution $u_{\lambda} \in H^1(\mathbb{R})$ of \eqref{eq:NLS} lies on a global curve. In this section, under (G1)-(G3), we show that when $N \geq 2$,  the local curve, whose existence is established in Lemma  \ref{cro:20210901-xc1}, belongs to a connected component of the set of positive solutions, which contains a positive solution for every $\lambda \in (0, + \infty)$.
Let $$\mathcal{S}=\{(\lambda, u)\in (0, + \infty) \times H_{rad}^{1}(\RN)  : (\lambda, u)\;\hbox{solves \eqref{eq:NLS}}, \, u>0 \}.$$
We define $\widetilde{\mathcal{S}} \subset \mathcal{S}$ as the connected component of $\mathcal{S}$ containing the solutions ($\lambda, u_{\lambda})$ for $\lambda \in (0, \lambda_0)$. Denoting by $P_1: (0, +\infty) \times  H_{rad}^{1}(\RN)  \rightarrow (0, + \infty)$ the projection onto the $\lambda$-component, we shall prove that $P_1(\widetilde{\mathcal{S}}) = (0,+ \infty)$.
To reach this conclusion,  we  reformulate \eqref{eq:NLS} into a fixed point problem.
For $\lambda>0$ fixed, the  norm $\|u\|_\lambda$, defined by
$$\|u\|_\lambda:=\left(\|\nabla u\|_2^2+\lambda\|u\|_2^2\right)^{\frac{1}{2}},$$
is equivalent to the usual norm $\|u\|_{H^1(\RN)}$. The gradient of $J_{\lambda}$ with respect to $\langle \cdot, \cdot\rangle_\lambda$ can be computed as
$$\nabla J_\lambda (u)=u- \left(-\Delta +\lambda\right)^{-1} g(u) =: u -\mathbb{T}_\lambda(u).$$
We shall apply classical degree theory arguments to the equation
$$
\mathbb{T}_\lambda(u)=u, \, \lambda\in (0, + \infty), \, u\in H^1_{rad}(\RN).
$$
Some preliminaries are necessary.

\bl\lab{lemma:20210903-zl1}
Let $N\geq 2$ and assume that (G1)-(G2) hold.  The operator $\mathbb{T}_\lambda: H_{rad}^{1}(\RN)\rightarrow H_{rad}^{1}(\RN)$ is completely continuous.
\el
\begin{proof}
We need to prove that $\mathbb{T}_\lambda$ maps any bounded set into a pre-compact set. We shall work on $H_{rad}^{1}(\RN)$ with the norm $\|\cdot\|_\lambda$.
Let $\{u_n\}\subset H_{rad}^{1}(\RN)$ be a bounded sequence. By the compact embedding $H_{rad}^{1}(\RN)\hookrightarrow\hookrightarrow L^p(\RN), \, 2<p<2^*$, there exists a non-negative $u\in H_{rad}^{1}(\RN)$  such that, passing to a subsequence if necessary,
\beq\label{convergence}
u_n\rightharpoonup u~\hbox{in}~H^1(\RN)~\hbox{and}~u_n\rightarrow u~\hbox{in}~L^p(\RN), \, \forall \,  2<p<2^*.
\eeq
Let $\phi_n:=\mathbb{T}_\lambda(u_n)$, i.e.,
\beq\lab{eq:20210903-zhje1}
-\Delta \phi_n+\lambda\phi_n=g(u_n)~\hbox{in}~ H^1(\RN).
\eeq
When $N\geq 3$, under the assumptions (G1)-(G2), for any $\varepsilon >0$ there exists $C_{\varepsilon} < + \infty$ such that
\beq\lab{eq:20210903-zhje2}
g(s)\leq \varepsilon s+C_\varepsilon s^{q},\,  q=2^*-1, \, s\in (0, + \infty).
\eeq
Testing \eqref{eq:20210903-zhje1} with $\phi_n$, by the H\"older inequality and the critical Sobolev inequality, we have
$$
\|\phi_n\|_{\lambda}^{2}= \int_{\RN}g(u_n)\phi_ndx
\leq \varepsilon \|u_n\|_2 \|\phi_n\|_2+C_\varepsilon \|u_n\|_{2^*}^{2^*-1} \|\phi_n\|_{2^*}
\leq  C_{\varepsilon, M} \|\phi_n\|_{\lambda},
$$
which implies that $\{\phi_n\}\subset H_{rad}^{1}(\RN)$ is bounded.  Assuming that $\phi_n\rightharpoonup \phi$ in $H^1(\RN)$, it follows that $\phi\in H^1(\RN)$ is a non-negative weak solution to
\beq\lab{eq:20210903-zhje3}
-\Delta \phi+\lambda \phi= g(u), \quad u \in H^1_{rad}(\RN).
\eeq
Now, combining  \eqref{convergence} and Lemma \ref{20210822-wl1}-(ii), we deduce, in a standard way, that
\beq\lab{eq:20210903-zhje4}
\int_{\RN}g(u_n)\phi_n \, dx\rightarrow \int_{\RN}g(u)\phi \, dx.
\eeq
Then by \eqref{eq:20210903-zhje1}, \eqref{eq:20210903-zhje3}-\eqref{eq:20210903-zhje4}, we have that
$$\|\phi_n\|_{\lambda}^{2}\rightarrow \|\phi\|_{\lambda}^{2}~\hbox{as}~n\rightarrow +\infty,$$
which implies that $\phi_n\rightarrow \phi$ in $H^1(\RN)$.
When $N=2$,  we take $q$  in \eqref{eq:20210903-zhje2} satisfying $q=\max\{\alpha-1,\beta-1\}$.  We can still  write
by H\"older inequality and the Sobolev embedding,
$$ \|\phi_n\|_{\lambda}^{2}=\int_{\RN}g(u_n)\phi_ndx
\leq \varepsilon \|u_n\|_2 \|\phi_n\|_2+C_\varepsilon \|u_n\|_{2q}^{q} \|\phi_n\|_{2}
\leq  C_{\varepsilon, M} \|\phi_n\|_{H^1(\RN)},
$$
and the rest of the proof proceeds as when $N \geq 3$.
\end{proof}

We recall that local fixed point index $ind(\mathbb{T}_\lambda, u_\lambda)$ is defined as
$$ind(\mathbb{T}_\lambda, u_\lambda)=deg\left(id-\mathbb{T}_\lambda, N_\varepsilon(u_\lambda), 0\right),$$
where $\varepsilon>0$ is small, $N_\varepsilon$ denotes the $\varepsilon$-neighborhood in $H^1_{rad}(\RN)$, and  deg denotes the Leray-Schauder degree. It is well defined when $u_\lambda$ is an isolated fixed point of $\mathbb{T}_\lambda$ in $H^1_{rad}(\RN)$.
\bl\lab{prop:20210821-p1}
Let $N \geq 2$ and assume that (G1)-(G3) hold. Let $\lambda_0 >0$ be given by Lemma \ref{cro:20210901-xc1}. For $\lambda\in (0,\lambda_0)$, let $u_\lambda$ be the unique positive solution to \eqref{eq:NLS}.  Then it holds $ind(\mathbb{T}_{\lambda}, u_{\lambda})=-1, \, \forall \lambda\in (0,\lambda_0)$.
\el
\begin{proof}
The lemma follows directly from Theorem \ref{Hofer}. Let us show that the condition $(\Phi)$ is satisfied. Denote by $\gamma_1 \in \mathbb{R}$ the first eigenvalue of $J''_{\lambda}(u_{\lambda})$ and $v \in H_{rad}^1(\RN)$ an associated eigenfunction, namely
$$v - ( - \Delta + \lambda)^{-1}g'(u_{\lambda})v = \gamma_1 v.$$
We assume that $\gamma_1 \leq 0$ and define $\mu := (1- \gamma_1)^{-1} \in (0, 1]$ we obtain that $0 \neq v \in  H_{rad}^1(\RN)$ is such that
$$ - \Delta v + \lambda v = \mu g'(u_{\lambda})v.$$
By the choice of $\lambda_0$, \eqref{eq:202109020e1}-\eqref{eq:202109020e2} readily imply that the eigenvalue problem
$$-  \Delta v + \lambda v = \theta g'(u_{\lambda})v, \quad v \in  H_{rad}^1(\RN)$$
possesses a smallest positive eigenvalue $\theta_1 \in (0,1]$.
Furthermore,  since $\|u_\lambda\|_\infty\rightarrow 0$ as $\lambda\rightarrow 0^+$, for $\lambda>0$ small enough, it holds that $g'(u_\lambda(x))>0$ in $\RN$. Then it is standard to show that $\theta_1$ is a simple eigenvalue. Thus, since $\gamma_1 = 1-\frac{1}{\theta_1}$ the condition $(\Phi)$ holds.  Also, by Lemma \ref{lemma:MPL} we know that the unique critical point $u_{\lambda}$ is of mp-type. Finally, since $\{(\lambda, u_\lambda):\lambda\in(0,\lambda_0)\}$ is a curve, the conclusion holds for all $ \lambda\in (0,\lambda_0)$.
\end{proof}

\bl\lab{cro:20210822-c1}
Let $N \geq 2$ and assume that (G1)-(G3) hold. Then
$P_1(\widetilde{\mathcal{S}})= (0, \infty).$
\el
\begin{proof}

We assume by contradiction that there exists a $\overline{\lambda} > \lambda_0$ such that $P_1(\widetilde{\mathcal{S}}) \cap \{\overline{\lambda}\} = \emptyset.$  Let $a =  \frac{\lambda_0}{2}$ and $b > \overline{\lambda}.$ In view of Corollary \ref{cro:20210824-c1} and Lemma \ref{lemma:no-bifurcation}, there exist two positive constants $C_1(a,b) < C_2(a,b)$ such that
\beq\label{inside}
 0 < C_1(a,b) < \|u\|_{H^{1}(\RN)} < C_2(a,b), \quad \mbox{for any} \, u \in \mathcal{U}_{a}^{b}.
\eeq
We define
$$\Omega = \{u \in H^{1}(\RN): \,  C_1(a,b) < \|u\|_{H^{1}(\RN)} < C_2(a,b)\},$$
$$ \Phi (\lambda, u) = u - T_{\lambda}(u), \, u \in H^{1}(\RN)$$
and
$$\Sigma = \{ (\lambda, u) \in [a,b] \times \overline{\Omega} : \Phi(\lambda,u) =0 \}.$$
We also use the notation $\Sigma_{\lambda}$ for the $\lambda$-slice of $\Sigma$, i.e.
$$\Sigma_{\lambda}= \{u \in \overline{\Omega} : (\lambda, u) \in \Sigma \}.$$
We note that
$$\Phi(\lambda, u) = u - T_{\lambda}(u) \neq 0, \quad \forall (\lambda, u) \in [a,b] \times \partial \Omega.$$
Indeed, if $\Phi(\lambda,u) =0$, then $u$ is positive by the maximum principle. Thus $u \in \mathcal{U}_{a}^{b}$ and \eqref{inside} holds. At this point, we apply
Proposition \ref{Proposition-degree} with $X = H_{rad}^{1}(\RN)$ and $\Omega, \Phi$ defined as above. Note that the assumption
$deg (\Phi(a,\cdot), \Omega, 0) \neq 0$  holds because we know from Lemma \ref{prop:20210821-p1} that
$deg(\Phi(a,\cdot), \Omega, 0) = ind(\mathbb{T}_{\lambda}, u_{\lambda}) =-1$. We conclude that the compact connected set $\mathcal{C} \subset \Sigma$  provided by Proposition \ref{Proposition-degree}  satisfies
$$\mathcal{C} \cap (\{a\} \times \Sigma_a) \neq \emptyset  \quad \mbox{and} \quad \mathcal{C} \cap (\{b\} \times \Sigma_b) \neq \emptyset.$$
Since necessarily $ \mathcal{C}$ coincides with $\widetilde{\mathcal{S}}$, this is in contradiction with our assumption that $P_1(\widetilde{\mathcal{S}}) \cap \{\bar{\lambda}\} = \emptyset $. The lemma is proved.
\end{proof}

\section{Application to existence, non-existence and multiplicity of positive normalized solutions.}\label{sec:application}
\numberwithin{equation}{section}

In this section we give the proof of our main result.

\noindent{\bf Proof of Theorem \ref{thm:20210822-th1}:}
Let us introduce the function
\beq\lab{eq:rho}
  \rho: \mathcal{S}\rightarrow (0,+\infty),\quad (\lambda,u)\mapsto \|u\|_2^2.
\eeq

{\bf(i)}
By Lemma \ref{cro:20210822-c1} for $N\geq 2$ and Theorem \ref{thm:20210901-th1} for $N=1$, there exists a connected set $\widetilde{\mathcal{S}}\subset \mathcal{S}$ with $P_1(\widetilde{\mathcal{S}})=(0,+\infty)$.
Recalling Theorem \ref{cro:20210824-xc1}, there exist $(\lambda_n, u_n)\subset \widetilde{\mathcal{S}}$ with $\lambda_n\rightarrow 0^+$ and $\|u_n\|_2^2\rightarrow 0$. Similarly, there exist $(\lambda'_n, u'_n)\subset \widetilde{\mathcal{S}}$ with $\lambda_n\rightarrow +\infty$ and $\|u\|_2^2\rightarrow +\infty$. Since $\widetilde{\mathcal{S}}$ is connected, it follows that $\rho$ is onto.

{\bf (vi)} As in the proof of (i), we have that $\rho(\widetilde{\mathcal{S}})=(0,+\infty)$, and the conclusion follows from Theorem \ref{cro:20210824-xc1}.

{\bf (ii)} By $$\rho(\mathcal{S})\supset \rho(\widetilde{\mathcal{S}})\supset (\min\{\|U\|_2^2, \|V\|_2^2\}, \max\{\|U\|_2^2, \|V\|_2^2\}),$$ we see that for any $a\in (\min\{\|U\|_2^2, \|V\|_2^2\}, \max\{\|U\|_2^2, \|V\|_2^2\})$,  \eqref{eq:NLS} possesses at least one normalized solution $(\lambda, u_\lambda)$ with $\lambda>0$ and $0<u_\lambda\in H_{rad}^{1}(\RN)$.
Recalling Theorem \ref{thm:loc-uniqueness}, \eqref{eq:NLS} has a unique solution $u_\lambda >0$ for $\lambda >0$ small or large enough. So for any $\delta\in (0, \min\{\|U\|_2^2, \|V\|_2^2\})$,  there exists some $\Lambda_1>0$ small enough and $\Lambda_2>0$ large enough such that
$$\begin{cases}
\rho(\lambda, u_\lambda)\in (\|U\|_2^2-\delta, \|U\|_2^2+\delta), &\forall \lambda\in (0,\Lambda_1),\\
\rho(\lambda, u_\lambda)\in (\|V\|_2^2-\delta, \|V\|_2^2+\delta), &\forall \lambda\in (\Lambda_2,+\infty).
\end{cases}
$$
On the other hand, by Corollary \ref{cro:20210824-c1} and Lemma \ref{lemma:no-bifurcation}, we can find some $M_1,M_2>0$ such that
$$\left\{\|u\|_2^2: u\in\mathcal{U}_{\Lambda_1}^{\Lambda_2}\right\}\subset [M_1,M_2].$$
Hence, we can take
$$a_1=\min\{\|U\|_2^2-\delta,\|V\|_2^2-\delta, M_1\}~\hbox{and}~a_2=\max\{\|U\|_2^2+\delta,\|V\|_2^2+\delta,M_2\}.$$
Then for any $a\in (0,a_1)\cup (a_2,+\infty)$, \eqref{eq:NLS}-\eqref{eq:norm} has no positive normalized solution.

{\bf (iii)} We only prove (iii-1).  By $$\rho(\mathcal{S})\supset \rho(\widetilde{\mathcal{S}})\supset (0,  \|V\|_2^2),$$ applying a similar argument as (ii), we can prove that \eqref{eq:NLS}-\eqref{eq:norm} has at least one normalized solution $(\lambda, u_\lambda)$ with $\lambda>0$ and $0<u_\lambda\in H_{rad}^{1}(\RN)$ if $0<a<\|V\|_2^2$. And there exists some $a^*\geq \|V\|_2^2$ such that \eqref{eq:NLS}-\eqref{eq:norm} has no positive normalized solution provided $a>a^*$. This finishs the proof of (iii-1). The case of (iii-2) can be proved similarly.

{\bf (v)} We only prove (v-1). By $$\rho(\mathcal{S})\supset \rho(\widetilde{\mathcal{S}})\supset (0,  \|U\|_2^2),$$
we can prove it in a similar way to (iii-1).

{\bf (iv)} We only prove (iv-1). In such  case, we have $\|u_\lambda\|_2^2\rightarrow 0$ both as $\lambda\rightarrow 0^+$ and as $\lambda\rightarrow +\infty$.
Define
$$a^*:=\max_{\widetilde{\mathcal{S}}}\rho(\lambda, u_\lambda).$$
Then there exists some $\lambda^*>0$ with $\|u_{\lambda^*}\|_2^2=a^*$. Then for any $a\in (0,a^*)$, there exist some $\lambda_1\in (0,\lambda^*)$ and $\lambda_2\in (\lambda^*, +\infty)$ such that
$$\rho(\lambda_1, u_{\lambda_1})=\rho(\lambda_2, u_{\lambda_2})=a.$$
That is, for any $a\in (0,a^*)$, \eqref{eq:NLS}-\eqref{eq:norm} has at least two different normalized solutions $(\lambda_i,u_i)$ with $\lambda_i>0$ and $0<u_i\in H_{rad}^{1}(\RN), i=1,2$.
Furthermore, applying a similar argument as in (ii), we can find some $M\geq a^*$ such that
 \eqref{eq:NLS}-\eqref{eq:norm} has no positive normalized solution provided $a>M$.  The case of (iv-2) can be proved similarly. We only note that in such a case, $\|u_\lambda\|_2^2\rightarrow +\infty$ both as $\lambda\rightarrow 0^+$ and $\lambda\rightarrow +\infty$.
\hfill$\Box$. \medskip

\vspace{1cm}

\end{document}